  \newtheorem{theorem}{Theorem}[section]
\newtheorem{lemma}[theorem]{Lemma}
\newtheorem{proposition}[theorem]{Proposition}
\newtheorem{definition}{Definition}
\newtheorem{remark}{Remark}[section]
\DeclareMathOperator{\scl}{S}
\DeclareMathOperator{\diag}{diag}
\newcommand\Aboxed[1]{
   \@Aboxed#1\ENDDNE}
\def\@Aboxed#1&#2\ENDDNE{%
   &
   \settowidth\@tempdima{$\displaystyle#1{}$}
   \setlength\@tempdima{\@tempdima+\fboxsep+\fboxrule}
   \kern-\@tempdima
   \boxed{#1#2}
}
\newcommand{\propref}[1]{Proposition~\ref{#1}}
\newcommand{\secref}[1]{Section~\ref{#1}}
\newcommand{\lemref}[1]{Lemma~\ref{#1}}
\newcommand{\cov}[2]{\nabla_{#1}{#2}} 
\newcommand{\bpm}{\begin{pmatrix}}
\newcommand{\met}[1]{\mathbf{#1}}
\newcommand{\epm}{\end{pmatrix}}
\newcommand{\meti}[1]{\omega^{#1} \otimes \omega^{#1}}
\newcommand{\lied}[2]{\mathcal{L}_{#2}{#1}}
\newcommand{\tp}[2]{\omega^{#1} \otimes \omega^{#2}}
\newcommand{\vf}[1]{\mathbf{#1}}
\newcommand{\tensor}[1]{\mathbf{#1}}
\newcommand{\ccf}{\tensor{H}}
\newcommand{\ccfop}{\widehat{\tensor{H}}}
\newcommand{\rg}{\tensor{RG}}
\newcommand{\rgop}{\widehat{\tensor{RG}}}
\newcommand{\ric}{\tensor{Rc}}
\newcommand{\ein}{\tensor{E}}
\newcommand{\bv}[1]{\vf{e}_{#1}}
\newcommand{\av}[1]{\tilde{\vf{e}}_{#1}}
\newcommand{\cv}[1]{\omega^{#1}}
\newcommand{\lp}{\left(}
\newcommand{\rp}{\right)}
\newcommand{\rcc}[2]{R_{{#1}{#2}}}
\begin{document}
\title{On algebraic solitons for geometric evolution equations on three-dimensional Lie groups }\author{T. H. Wears\\ Department of Mathematics and Computer Science\\ Longwood University\\ 201 High St\\ Farmville, VA 23909, USA}

\maketitle

\begin{abstract}
In this paper, we investigate the relationship between algebraic soliton metrics and soliton metrics for geometric evolution equations on Lie groups.
After discussing the general relationship between algebraic soliton metrics and soliton metrics, we investigate the cross curvature flow and the second order renormalization group flow  on simply connected three-dimensional unimodular Lie groups, providing a complete classification of left invariant algebraic solitons on such spaces.
\end{abstract}

\section{Introduction}

A major theme in modern geometry and topology is the use of geometric evolution equations to improve a certain geometric structure or geometric quantity on a smooth manifold $\mathcal{M}$.  
Perhaps the most celebrated example of this is the Ricci flow as introduced by Richard Hamilton in \cite{hamiltonintro}, where one views the Ricci flow equation
\begin{equation}
\label{ricciflow}
\frac{\partial \met{g}}{\partial t} = -2\ric\left[\met{g}\right] ; \hspace{.5in} \met{g}_{0} = \met{g}(0)
\end{equation}
as something akin to a heat equation for the evolution of the metric tensor $\met{g}$ on the underlying manifold structure.
Of particular interest in this situation is the evolution of geometric quantities associated with metric tensor $\met{g}$ and the singularities that form (in either finite or infinite time) for solutions of the corresponding system of partial differential equations.
Using a rescaling of the flow near, such singularities are typically modeled with self-similar solutions of the corresponding Ricci flow.
These self-similar solutions are what are known as \emph{Ricci solitons} and are characterized for the Ricci flow as solutions to \eqref{ricciflow} the form $\met{g}(t) = c(t)\phi^{*}_{t}\met{g}_{0}$, where $\phi_{t}$ is one-parameter family of diffeomorphsims of $\mathcal{M}$ and $c$ is a scalar valued function on $\mathcal{M}$.
Ricci solitons have been studied extensively since Hamilton's introduction of the Ricci flow.  
We refer the reader to  \cite{ChowBrown2},  \cite{ChowBrown1}, \cite{ChowBlue}, \cite{hamiltonsing}, and references therein for a further discussion of the role of Ricci solitons in the study of the Ricci flow.

In addition to playing a role in the study of the Ricci flow on closed manifolds, Ricci solitons have also played an important role in the study of preferred and/or distinguished metrics on Lie groups.  
Namely, in \cite{Lauret}, Lauret introduced Ricci solitons as a natural generalization to an Einstein metric for nilpotent Lie groups.
More to the point, in \cite{Milnor}, Milnor establishes that a nilpotent Lie group cannot carry a left invariant Einstein metric (i.e., a metric $\met{g}$ satisfying $\ric\left[\met{g}\right] = \lambda \met{g}$ for some scalar $\met{g}$).
Viewing an Einstein metric as a fixed point for the Ricci flow \eqref{ricciflow}, then a Ricci soliton can be suitably interpreted as a geometric fixed point for the flow as a Ricci soliton is a metric that  evolves only by scaling and diffeomorphism.
Furthermore, it is clear that that a Ricci soliton $\met{g}$ must satisfy the  Ricci Soliton equation 
	\begin{equation}
	\label{ricsolitonequation}
	 \beta \met{g} + \lied{\vf{X}}{\met{g}} = -2\ric\left[\met{g}\right],
	\end{equation}
where $\beta$ is a scalar,  $\vf{X}$ is a vector field on $\mathcal{M}$, and $\lied{\vf{X}}{\met{g}}$ is the Lie derivative of the metric $\met{g}$ in the direction of the vector field $\vf{X}$.
When $\met{g}$ is an Einstein metric, one can take the vector field $\vf{X}$ to be a Killing field of the metric $\met{g}$, whereas when $\met{g}$ is a non-trivial Ricci soliton, the vector field $\vf{X}$ appearing in \eqref{ricsolitonequation} will be unique up to a Killing field of the metric tensor $\met{g}$.

With this generalization in mind, Lauret \cite{Lauret} establishes that Ricci soliton metrics on nilpotent Lie groups can be found via algebraic methods alone. 
Namely, denoting the Ricci operator by  $\widehat{\ric\left[\met{g}\right]}$,  Lauret notes that any left invariant metric $\met{g}$ such that $\widehat{\ric\left[\met{g}\right]} - \kappa\mathrm{Id}$ is a derivation of the corresponding  Lie algebra (for some scalar $\kappa$) will give rise to a corresponding Ricci soliton.  
Lauret refers to such metrics as \emph{algebraic Ricci solitons}.
Lauret additionally shows that Ricci solitons on simply connected nilpotent Lie groups are unique up to a constant scalar multiple and an automorphism of the corresponding Lie algebra.  
See \cite{Lauret} for details. 

After Lauret introduced algebraic Ricci solitons, they have been studied extensively on Lie groups and homogeneous spaces by numerous authors.
It is interesting to note that to date, the only known examples of algebraic Ricci solitons on non-compact Lie groups occur on solvable Lie groups.
This situation is identical to that of the case of Einstein metrics.
See \cite{jablonski3}, \cite{jablonski2}, \cite{jablonski1}, and  \cite{lauret2} for further discussion.

In the current paper, our aim is twofold.  For starters, we aim to show that Lauret's ideas pertaining to algebraic solitons apply equally well to an arbitrary geometric evolution equation (subjection to the appropriate conditions) for a left-invariant Riemannian metric on a simply-connected Lie group.
This will build off of the work of Glickenstein in \cite{Glick} and Lauret in \cite{Lauret}.
We then aim to apply these ideas to the cross curvature flow and the second order renormalized group flow on simply connected three-dimensional unimodular Lie groups, where we classify those algebraic solitons that give rise to self-similar solutions of the respective flow. 
Note that in \cite{Glick}, Glickenstein investigates the cross curvature flow on three-dimensional unimodular Lie groups using Riemannian groupoids, and in \cite{Gimregeometric}, the authors discuss steady solitons to the second order renormalized group flow.
As it pertains to soliton metrics for the cross curvature flow, we will obtain similar results to Glickenstein, but we use entirely different methods.

The outline of the paper is as follows.  
In \secref{geomprelim} we introduce the cross curvature flow on three-dimensional manifolds (\secref{xcfintro}) and the second order renormalization group flow (\secref{rg2intro}).
In \secref{gensoliton}, we introduce the appropriate conditions that must be satisfied by a geometric evolution equation in order to tie algebraic solitons together with self-similar solutions to the corresponding flow.
We then follow this up with a brief discussion of some of the necessary algebraic conditions that a simply connected Lie group and its corresponding Lie algebra must satisfy in order to be able to support an algebraic soliton for a given geometric evolution equation for the metric tensor.

In \secref{algsolliegroup}, we begin by reviewing Milnor's construction of the so-called \emph{Milnor frames} on a simply connected three-dimensional unimodular Lie group equipped with a left invariant Riemannian metric.
We then review the geometric expressions and tensors associated with the cross curvature flow and the second order renormalized group flow as they appear in a Milnor frame.
This is followed in \secref{r3} - \secref{su2} by the classification of algebraic solitons that give rise to self-similar solutions of the cross curvature flow and second order renormalized group flow on simply connected three-dimensional unimodular Lie groups.

\section{Geometric preliminaries and notation}
	\label{geomprelim}
	\subsection{Geometric evolution equations on 3-manifolds}
	In this section we recall the definitions of the cross curvature flow $(\mathrm{XCF})$ and the second order renormalization group flow ($\mathrm{RG}$-$\mathrm{2}$) on a three-dimensional Riemannian manifold $(\mathcal{M}, \met{g})$.
	For general Riemannian manifolds, neither flow is well-posed and one is not able to guarantee short-time existence of solutions.
	However, if one restricts their attention to left invariant metrics on Lie groups  (or more generally to homogenous spaces), then short-time existence and uniqueness follows from the standard existence and uniqueness results for ordinary differential equations.
	On a Lie group, for example, the selection of a frame at the identity is a tantamount to selecting global coordinate functions on the fiber of the bundle of positive-definite symmetric (0,2)-tensors and the study of the geometric evolution equation in question reduces to the evolution of the coefficient functions of the metric tensor with respect to the selected frame.
	On a three-dimensional Lie group, this results in a system of six coupled ordinary differential equations, but we will see that much like the situation for the Ricci flow on three-dimensional homogeneous geometry, this can be to reduced system of three ordinary differential equations for both the $\mathrm{XCF}$ and $\mathrm{RG}$-$\mathrm{2}$ flow.

	For a more detailed discussion of the cross curvature flow, we refer the readers to \cite{xcfintro}, where the the flow was introduced by Chow and Hamilton  on three-dimensional manifolds with strictly positive or strictly negative sectional curvature, and to \cite{xcf1} and \cite{xcf2}, where the authors study the $\mathrm{XCF}$ on three-dimensional homogeneous geometries.
	Additionally, in \cite{Buckland}, Buckland establishes the existence of solutions to the cross curvature flow on 3-manifolds equipped with an initial Riemannian metric that has everywhere positive or everywhere negative sectional curvature.
	For examples of the $\mathrm{XCF}$ on a square torus bundle over $\mathbf{S}^{1}$ and on $\mathbf{S}^{2}$-bundles over $\mathbf{S}^{1}$, see \cite{MaChen}, and for a program trying to utilize the $\mathrm{XCF}$ for the purpose of trying to to show that the moduli space of negatively curved metrics on a closed hyperbolic $3$-manifold is path-connected, see \cite{Knopf}.
	
	For a more detailed discussion of the $\mathrm{RG}$-$\mathrm{2}$ flow we refer the reader to \cite{Gimregeometric}, where the authors focus on a  geometric introduction to the $\mathrm{RG}$-$\mathrm{2}$ flow, and \cite{Gimre3d}, where the authors study the $\mathrm{RG}$-$\mathrm{2}$ flow on three-dimensional homogeneous spaces in a spirit similar to the analysis of the Ricci flow on three-dimensional homogeneous spaces carried out by Isenberg and Jackson in \cite{Isenberg3}.
	
	\subsubsection{$\mathrm{XCF}$}
	\label{xcfintro}
		Before we introduce the positive and negative cross curvature flow $\lp \pm XCF\rp$ on a locally homogeneous three-dimensional manifold as defined in \cite{xcf1},  a few remarks are in order.
		 In \cite{xcfintro}, Chow and Hamilton considered the $\mathrm{XCF}$ flow on  three-dimensional Riemannian manifolds $\lp \mathcal{M}, \met{g}\rp$ where the sectional curvatures of the initial metric $\met{g}$ were strictly positive or strictly negative, and the sign of the of the $\mathrm{XCF}$ was chosen depending on the sign of the sectional curvatures ($\pm$ $\mathrm{XCF}$ when the sign of the sectional curvatures of $\met{g}$ are $\mp$).
		 One does not expect to obtain general existence results for the $\mathrm{XCF}$, but as noted above, by restricting one's attention to Lie groups or (locally) homogeneous spaces, short-time existence of solutions is easily obtained from standard ODE results (regardless of the signs of the sectional curvatures).
		Following  \cite{xcf1} and \cite{xcfintro} we will now define the cross curvature tensor and the positive and negative $\mathrm{XCF}$ on locally homogeneous manifolds.\\
	
		The following construction/definition of the cross curvature tensor is taken from \cite{xcfintro}.
		Let $(\mathcal{M}, \met{g})$ be a three-dimensional Riemannian manifold, with corresponding Ricci tensor $\ric\left[\met{g}\right] = \ric$ and scalar curvature $\scl\left[\met{g}\right] = \scl$.
		The component functions of the Ricci tensor with respect to a given local frame will be denoted by $\rcc{i}{j}$.
		Denote the corresponding Einstein tensor by $\ein\left[\met g\right] = \ein = \ric - \frac{\scl}{2}\met{g}$ and define a $(2,0)$-tensor $\tensor{P}\left[\met{g}\right] = \tensor{P}$ by raising the indices on the Einstein tensor $\ein$.  
		The component functions of $\tensor{P}$ with respect to a local frame are thusly
			\begin{equation*}
				P^{ij} = g^{ik}g^{jl}\rcc{kl}   -  \frac{\scl}{2}g^{ij}.
			\end{equation*}
		Provided that $\lp P^{ij}\rp$ is invertible, we denote $\lp P^{ij}\rp^{-1}$ by $\lp V_{ij}\rp$ and then define the cross curvature tensor $\ccf$ by the component functions
			\begin{equation}
				H_{ij} = \lp \frac{\det P^{ij}}{\det g^{ij}}\rp V_{ij}.
			\end{equation}
		As observed in  \cite{xcf1} and \cite{MaChen}, the cross curvature tensor $\ccf$ takes a particularly simple form in a local orthonormal frame $\bv{1}$, $\bv{2}$, $\bv{3}$ where the Ricci tensor $\ric$ is diagonalized.
		When $\lp l, m, n\rp$ is any permutation of $\lp 1, 2, 3\rp$,  let  $K_{l} = K\lp \bv{m} \wedge \bv{n}\rp$  denote the principal sectional curvature of the plane that is orthogonal to $\bv{l}$.  
		We thus have $\rcc{l}{l} = K_{m} + K_{n}$,
		and it follows that the cross curvature tensor $\ccf$ is diagonalized with respect to the indicated frame and the component functions of $\ccf$ are
			\begin{equation}
			\label{xcfcompgen}
				H_{ll} =  K_{m}K_{n}, \hspace{.5in} \textrm{$\lp l, m, n\rp$ a permutation of $\lp 1, 2, 3\rp$.}
			\end{equation}
		Note that \eqref{xcfcompgen} defines the cross curvature tensor when $\tensor{P}\left[\met{g}\right]$ fails to be invertible.
		Additionally, note that $\ccf$ obeys the following important homogeneity property for a scaling of the metric tensor $\met{g}$:  
			\begin{equation}
			\label{scaling}
				\ccf\left[ c \met{g}\right] = \frac{1}{c} \ccf\left[\met{g}\right],  \quad c \in \mathbb{R}_{>0}.
			\end{equation}
		
		Following \cite{xcf1}, we now define the  $\mathrm{XCF}$ on locally homogeneous three-dimensional manifolds.

			\begin{definition}[Cross Curvature Flow]
			\label{XCF}
				Let $(\mathcal{M}, \met{g}_{0})$ be a locally homogeneous three-dimensional manifold.
				The \emph{positive cross curvature flow} (+XCF) is defined by
					\begin{equation*}
						\frac{\partial \met{g}}{\partial t} = 2 \ccf\left[ \met{g}\right] ; \hspace{.5in} \met{g}(0) = \met{g}_{0}
					\end{equation*}
				and the \emph{negative cross curvature flow} (-XCF) is defined by
					\begin{equation*}
						\frac{\partial \met{g}}{\partial t} = -2 \ccf\left[ \met{g}\right] ; \hspace{.5in} \met{g}(0) = \met{g}_{0}
					\end{equation*}
			\end{definition}
		Due to original use of the cross curvature flow by Chow and Hamilton, there is not a clear choice as to what should be the forward direction for the $\mathrm{XCF}$ when the signs of the sectional curvature vary and both directions of the flow on three-dimensional homogeneous spaces are investigated in  \cite{xcfsl2}, \cite{xcf1}, and \cite{xcf2} in a similar spirit to the analysis carried out by Isenberg and Jackson for the Ricci flow on homogeneous three-dimensional geometries in their seminal paper \cite{Isenberg3}.

		\subsubsection{The $\mathrm{RG}$-2 Flow}
		\label{rg2intro}
		 
		 We will now briefly introduce the $\mathrm{RG}$-$\mathrm{2}$ flow.		 
		 The $\mathrm{RG}$-$\mathrm{2}$flow is a second order approximation of the Renormalization Group flow that corresponds to a perturbative analyses of nonlinear sigma model quantum field theories from a world sheet into $\lp\mathcal{M}, \met{g}\rp$, and unlike the $\mathrm{XCF}$, which has currently only been defined and investigated on three-dimensional Riemannian manifolds, the $\mathrm{RG}$-$\mathrm{2}$ flow has been studied in arbitrary dimensions.
		 For an introduction to the physics of the Renormalization Group flow, we refer the reader to \cite{Friedan1} and \cite{Gawd}.  
		 We will mostly follow the geometrical introduction to the Renormalization Group flow as found in the work of Gimre, Guenther, and Isenberg in \cite{Gimregeometric} and \cite{Gimre3d}.

		 The evolution of the metric tensor $\met{g}$ under the nonlinear sigma quantum field theories takes the form
		 	\begin{equation}
			\label{renormalization}
			\frac{\partial \met{g}}{\partial t} = - \alpha \ric\left[\met{g}\right] - \frac{\alpha^{2}}{2} \tensor{Rm}^{2}\left[\met{g}\right] + \mathcal{O}\lp \alpha ^3 \rp,
			\end{equation}
		where $\alpha$ denotes a positive coupling constant and the tensor $\tensor{Rm}^{2} \left[\met{g}\right] = \check{R}_{ij}\tp{i}{j}$ involves quadratic terms stemming from the full Riemannian curvature tensor of the metric tensor $\met{g}$.
		The component functions of $\tensor{Rm}^{2} \left[\met{g}\right] = \check{R}_{ij}\tp{i}{j}$ are
			\[
			\check{R}_{ij} = Rm_{iklm}Rm_{jpqr}g^{kp}g^{lq}g^{mr},
			\]
		where $\tensor{Rm} = Rm_{ijkl}\tp{i}{j}\otimes\tp{k}{l}$ denotes the Riemannian curvature tensor of $\met{g}$ and $g^{ij}$ denote the component functions of $\met{g}^{-1}$.
		After an appropriate rescaling of the parameter $t$, one finds that the second order approximation of Renormalization group flow takes the form
			\begin{equation}
			\label{renormalization2}
			\frac{\partial \met{g}}{\partial t} = - 2 \ric\left[\met{g}\right] - \frac{\alpha}{2} \tensor{Rm}^{2} \left[\met{g} \right].
			\end{equation}
		 We will denote the tensor  $-2 \ric\left[\met{g}\right] - \frac{\alpha}{2} \tensor{Rm}^{2} \left[\met{g} \right]$ by $\rg\left[\met{g}\right]$ and we define the second order Renormalization Group flow (the $\mathrm{RG}$-$\mathrm{2}$ flow) for a Riemannian metric on a smooth manifold $\mathcal{M}$ as follows.
		 \begin{definition}[$\mathrm{RG}$-$\mathrm{2}$ Flow]
		 Let $\lp \mathcal{M}, \met{g}_{0}\rp$ be a smooth Riemannian manifold.  
		 The \emph{second order Renormalization Group ($\mathrm{RG}$-$\mathrm{2}$)  flow} is the geometric evolution equation for the Riemannian metric $\met{g}_{0}$ defined by 
		 	\begin{equation}
			\label{rg2floeqn}
				\frac{\partial \met{g}}{\partial t} = \rg\left[\met{g}\right] =  - 2 \ric\left[\met{g}\right] - \frac{\alpha}{2} \tensor{Rm}^{2} \left[\met{g} \right] ; \hspace{.5in} \met{g}(0) = \met{g}_{0}.
			\end{equation}
		 \end{definition}
		The  two terms comprising the tensor $\rg\left[\met{g}\right]$ behave in the following manner under a positive scaling of the metric tensor $\met{g}$: $\ric\left[c \met{g}\right] =  \ric \left[\met{g} \right]$  and $\tensor{Rm}^{2} \left[c \met{g} \right] = \frac{1}{c} \tensor{Rm}^{2}$.
 		Additionally, note in particular that the first order approximation of the Renormalization Group flow is the Ricci flow and that \eqref{rg2floeqn} can be studied mathematically as a non-linear deformation of the Ricci flow, which is precisely the point of view that is taken in \cite{Gimregeometric} and \cite{Gimre3d}.
		
		In \cite{Gimre3d}, the authors study the $\mathrm{RG}$-$\mathrm{2}$ flow on three-dimensional homogenous geometries.
		In particular the authors study how the asymptotic behavior of the $\mathrm{RG}$-$2$ flow depends on the parameter $\alpha$ and how it compares with the asymptotic flow of the Ricci flow, where they use the results of \cite{Isenberg3} as their benchmark and guide.
		The short-time existence and uniqueness problem for the $\mathrm{RG}$-$\mathrm{2}$ flow is unsettled for a general Riemannian manifold.  However, in \cite{Gimreexists}, the authors establish the existence and short-time uniqueness for the second order Renormalization Group flow initial value problem on closed Riemannian manifolds $\lp \mathcal{M}, \met{g}_{0}\rp$ in general dimensions in the case where the sectional curvatures  of the  initial metric $\met{g}_{0}$ satisfy $1 + \alpha K_{P} > 0$ at all points $p \in \mathcal{M}$ and two-planes $P \subset T_{p} \mathcal{M}$.
		As noted above, we remind the reader that the existence and uniqueness of solutions to the $\mathrm{RG}$-$\mathrm{2}$ flow on Lie groups (or more generally homogeneous geometries) is guaranteed as the flow reduces to a system of ordinary differential equations for the inner product on a chosen product.

		  \subsubsection{Solitons}
		  \label{gensoliton}
		 We now make several general observations concerning the $\mathrm{XCF}$ and the $\mathrm{RG}$-$\mathrm{2}$ flow and the relationship between self-similar solutions of the flows and so-called solitons.
		 For simplicity, we will assume that $\lp\mathcal{M}, \met{g}\rp$ is a three-dimensional Riemannian manifold.
		 Let $\tensor{T}: \mathrm{Sym}^{2}\left( T^{*}\mathcal{M}\right) \to \mathrm{Sym}^{2}\lp T^{*} \mathcal{M}\rp$ be a fiber preserving map of the bundle of symmetric (0,2)-tensors on $\mathcal{M}$ and consider a geometric evolution equation on $\mathcal{M}$ of the form
		 	\begin{equation}
			\label{generalflow}
				\frac{\partial \met{g}}{\partial t } = \tensor{T}\left[ \met{g}\right]; \hspace{.5in} \met{g}(0) = \met{g}_{0}.
			\end{equation}
		Observe, for example,  that the Ricci flow, the $\mathrm{RG}$-$\mathrm{2}$ flow and the $\mathrm{XCF}$ are all of the indicated form.
		Following \cite{Glick}, we now take note of the following important properties that can be satisfied by $\tensor{T}$.
			\begin{definition}[Natural and Homogeneous]
			\label{natural}
			We say that $\tensor{T}: \mathrm{Sym}^{2}\left( T^{*}\mathcal{M}\right) \to \mathrm{Sym}^{2}\lp T^{*} \mathcal{M}\rp$  is 
				\begin{enumerate} 
					\item \emph{natural} if for all diffeomorphisms $\varphi : \mathcal{M} \to \mathcal{M}$ and all Riemannian metrics $\met{g}$ on $\mathcal{M}$, $\tensor{T}$ satisfies $\varphi^{*} \tensor{T}\left[ \met{g}\right] = \tensor{T}\left[\varphi^{*} \met{g}\right]$, and
					
					\item \emph{homogeneous of degree $q$} if for any scaling $c \met{g}$ of a Riemannian metric $\met{g}$, $\tensor{T}$ satisfies $\tensor{T}\left[c \met{g}\right] = c^{q} \tensor{T}\left[\met{g}\right]$.
				\end{enumerate}
			\end{definition}
			
			\begin{remark}
			Note that $\ric$, $\ccf$ and $\rg$ are all natural, while $\ric$ is homogeneous of degree zero and $\ccf$ is homogeneous of degree $p = -1$. $\rg$ fails to be homogeneous as it is comprised of two homogeneous terms of different degrees.
			Further, note that the definition of natural amounts to saying that the symmetry group of the evolution equation \eqref{generalflow} contains the full diffeomorphism group of $\mathcal{M}$.
			In the case where  $\lp \mathcal{M}, \met{g}\rp$ is a homogeneous space  it then follows that $\tensor{T}\left[\met{g}\right]$ is invariant under the group acting on $\mathcal{M}$.
			In particular, if $\tensor{T}$ is natural  and $\met{g}$ is a left invariant metric on a Lie group, then $\tensor{T}\left[\met{g}\right]$ will be left invariant as well.
			\end{remark}			
			
			Given a geometric evolution equation on $\mathcal{M}$ of the form \eqref{generalflow},  a solution $\met{g}(t)$ that evolves only by scaling and diffeomorphism is said to be a \emph{self-similar solution}. 
			Thus a self-similar solution is of the form $\met{g}(t) = c(t)\varphi_{t}^{*}\met{g}_{0}$, where $\varphi_{t}$ is a one-parameter family of diffeomorphisms of $\mathcal{M}$ with $\varphi_{0} = \mathrm{Id}$ and $c(t)$ is a real-valued function satisfying $c(0) =1$.
			Such a solution should be regarded as a geometric fixed point for the flow \eqref{generalflow}.
			
			Related to self-similar solutions of the flow will be the so called \emph{$\tensor{T}$-soliton structures}.
			The quadruple $\left(\mathcal{M}, \met{g}_{0}, \vf{X}, \beta \right)$, where $\vf{X}$ is a vector field on $\mathcal{M}$ and $\beta \in \mathbb{R}$,  is said to be a \emph{$\tensor{T}$-soliton structure} for the geometric evolution equation \eqref{generalflow} if
				\begin{equation}
				\label{solitoneq}
					\beta \met{g}_{0} + \lied{\met{g}_{0}}{\vf{X}} = \tensor{T}\left[ \met{g}_{0}\right],
				\end{equation}
			where $\lied{\met{g}_{0}}{\vf{X}}$ denotes the Lie derivative of the metric $\met{g}_{0}$ in the direction of the vector field $\vf{X}$.
			 $\left(\mathcal{M}, \met{g}_{0}, \vf{X}, \beta \right)$ is said to be \emph{expanding} if $\beta$ is positive, \emph{steady} if $\beta $ is zero, and \emph{shrinking} if $\beta $ is negative.
			 We will  refer to the metric $\met{g}_{0}$ from the quadruple  $\left(\mathcal{M}, \met{g}_{0}, \vf{X}, \beta \right)$ as a $\tensor{T}$-soliton, emphasizing $\vf{X}$ and $\beta$ only when needed. 
			 As per usual, note that the vector field $\vf{X}$ in the soliton structure  $\left(\mathcal{M}, \met{g}_{0}, \vf{X}, \beta \right)$ is unique up to the addition of a Killing field of the metric tensor $\met{g}_{0}$.
			 
			 It is tempting to use the $\tensor{T}$-soliton equation to define self-similar solutions to \eqref{generalflow}, but as the following proposition shows, this requires that $\tensor{T}$ be natural and homogeneous.
			The key link between self-similar solutions of the evolution equation \eqref{generalflow} and soliton metrics was first established in \cite{Glick} and is a straight forward generalization of the relationship between Ricci solitons and self-similar solutions of the Ricci flow.
		
			\begin{proposition}
			\label{solitonselfsimilar}
				Let $\tensor{T} : \mathrm{Sym}^{2}\left(T^{*}\mathcal{M}\right) \to \mathrm{Sym}^{2}\left(T^{*}\mathcal{M}\right)$ be fiber preserving and assume that $\tensor{T}$ is natural.
					\begin{enumerate}
						\item  If $\met{g}(t) = c(t)\varphi^{*}_{t}\met{g}_{0}$ is a self-similar solution to the geometric evolution equation \eqref{generalflow}, then $\met{g}_{0}$ is a $\tensor{T}$-soliton.
						\item If $\met{g}_{0}$ is a steady $\tensor{T}$-soliton, then there is a self-similar solution to \eqref{generalflow} of the form $\met{g}(t) = \varphi_{t}^{*}\met{g}_{0}$ (i.e., $\met{g}_{0}$ evolves by diffeomorphisms).
						\item If $\met{g}_{0}$ is a $\tensor{T}$-soliton for \eqref{generalflow} and, in addition, $\tensor{T}$ is homogeneous of degree $q$, then there is a self-similar solution to \eqref{generalflow} of the form $\met{g}(t) = c(t)\varphi_{t}^{*}\met{g}_{0}$.
					\end{enumerate}
			\end{proposition}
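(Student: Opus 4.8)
The three parts all rest on one identity: if $\varphi_{t}$ is a one-parameter family of diffeomorphisms generated by a (possibly time-dependent) field $\vf{Y}_{t}$, in the sense that $\frac{d}{dt}\varphi_{t} = \vf{Y}_{t}\circ\varphi_{t}$, then $\frac{\partial}{\partial t}\lp\varphi_{t}^{*}\met{g}_{0}\rp = \varphi_{t}^{*}\lp\lied{\met{g}_{0}}{\vf{Y}_{t}}\rp$. The plan is to feed the self-similar ansatz $\met{g}(t)=c(t)\varphi_{t}^{*}\met{g}_{0}$ into \eqref{generalflow}, apply this identity on the left-hand side, and use the naturality (and, for part (3), the homogeneity) of $\tensor{T}$ to simplify the right-hand side, thereby reducing the PDE to an ODE in $c(t)$.

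For part (1) I would differentiate the ansatz at $t=0$. Since $\varphi_{0}=\mathrm{Id}$ and $c(0)=1$, the product rule and the identity above give $\frac{\partial\met{g}}{\partial t}\big|_{t=0} = c'(0)\met{g}_{0} + \lied{\met{g}_{0}}{\vf{X}}$, where $\vf{X}:=\frac{d}{dt}\big|_{t=0}\varphi_{t}$. As $\met{g}(t)$ solves \eqref{generalflow}, the left-hand side equals $\tensor{T}[\met{g}_{0}]$, so putting $\beta:=c'(0)$ yields exactly the soliton equation \eqref{solitoneq}; this step needs only that $\met{g}(t)$ is a genuine solution. For part (2), given a steady soliton $\lied{\met{g}_{0}}{\vf{X}}=\tensor{T}[\met{g}_{0}]$, I would take $\varphi_{t}$ to be the local flow of the fixed field $\vf{X}$ and set $\met{g}(t)=\varphi_{t}^{*}\met{g}_{0}$; the identity gives $\frac{\partial\met{g}}{\partial t}=\varphi_{t}^{*}\tensor{T}[\met{g}_{0}]$, and naturality turns the right-hand side into $\tensor{T}[\varphi_{t}^{*}\met{g}_{0}]=\tensor{T}[\met{g}(t)]$, so $\met{g}(t)$ is a solution with $\met{g}(0)=\met{g}_{0}$.

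Part (3) is the substantive case. Here I would keep the full ansatz $\met{g}(t)=c(t)\varphi_{t}^{*}\met{g}_{0}$ and let $\varphi_{t}$ be the flow of the rescaled field $\vf{Y}_{t}=c(t)^{q-1}\vf{X}$. Differentiating and applying the identity produces the left-hand side $c'(t)\varphi_{t}^{*}\met{g}_{0}+c(t)\varphi_{t}^{*}\lp\lied{\met{g}_{0}}{\vf{Y}_{t}}\rp$, while homogeneity of degree $q$ followed by naturality collapses the right-hand side to $\tensor{T}[c(t)\varphi_{t}^{*}\met{g}_{0}]=c(t)^{q}\varphi_{t}^{*}\tensor{T}[\met{g}_{0}]$; inserting the soliton equation $\tensor{T}[\met{g}_{0}]=\beta\met{g}_{0}+\lied{\met{g}_{0}}{\vf{X}}$ and pulling back by $\lp\varphi_{t}^{-1}\rp^{*}$ reduces everything to
\[
c'(t)\met{g}_{0}+c(t)\lied{\met{g}_{0}}{\vf{Y}_{t}}=c(t)^{q}\lp\beta\met{g}_{0}+\lied{\met{g}_{0}}{\vf{X}}\rp.
\]
The choice $\vf{Y}_{t}=c(t)^{q-1}\vf{X}$ forces the two Lie-derivative terms to coincide, leaving the scalar ODE $c'(t)=\beta c(t)^{q}$ with $c(0)=1$, solved by $c(t)=e^{\beta t}$ when $q=1$ and by $c(t)=\lp 1+(1-q)\beta t\rp^{1/(1-q)}$ when $q\neq 1$.

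I expect the only genuine obstacle to be the bookkeeping in part (3), namely verifying that rescaling $\vf{X}$ by the factor $c(t)^{q-1}$ is precisely what reconciles the factor $c(t)$ produced by the product rule with the factor $c(t)^{q}$ produced by homogeneity. A secondary technical point is the short-time existence of $\varphi_{t}$: since $\vf{Y}_{t}$ is only a time-dependent rescaling of the fixed field $\vf{X}$, its flow is a reparametrization $\varphi_{t}=\psi_{\tau(t)}$ of the flow $\psi_{s}$ of $\vf{X}$, with $\tau(t)=\int_{0}^{t}c(s)^{q-1}\,ds$, so the required family of diffeomorphisms exists on the interval where $c(t)>0$ — which is all that is needed, since the proposition asks only for existence of a self-similar solution and not for uniqueness.
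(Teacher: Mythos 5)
Your proposal is correct and follows essentially the same route as the paper: differentiate the ansatz and evaluate at $t=0$ for part (1), flow the soliton field $\vf{X}$ for part (2), and for part (3) rescale to $\vf{Y}_{t}=c(t)^{q-1}\vf{X}$ so that the Lie-derivative terms match and the problem reduces to the ODE $c'(t)=\beta c(t)^{q}$, $c(0)=1$. Your added touches --- noting that part (1) needs no naturality when one evaluates at $t=0$ directly, writing out the explicit solutions of the ODE, and justifying existence of the flow of $\vf{Y}_{t}$ as a reparametrization of the flow of $\vf{X}$ --- are harmless refinements of the same argument.
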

			 
			 \begin{proof}
			 	\begin{description}
				\item $\mathit{1.}$
					Assume that $\met{g}(t) = c(t) \varphi^{*}_{t} \met{g}_{0}$ is a self-similar solution to \eqref{generalflow} and let $\vf{X}$ be the vector field on $\mathcal{M}$ defined by 
					$\vf{X}(p) = \frac{d}{dt}\left(\varphi_{t}(p)\right)$, $p \in \mathcal{M}$.
					It follows from properties of Lie derivatives that
						\begin{align*}
							\frac{\partial\met{g}(t)}{\partial t} &= c^{\prime}(t) \varphi_{t}^{*}\met{g}_{0} + c(t)\varphi^{*}_{t}\left(\lied{\met{g}_{0}}{\vf{X}}\right)&\\
												&= \varphi^{*}_{t}\lp c^\prime(t) \met{g}_{0} + c(t) \lied{\met{g}_{0}}{\vf{X}}\rp  &  \textrm{(Properties of pull-backs)} \\
												&= \tensor{T}\left[c(t) \varphi_{t}^{*}\met{g}_{0}\right]  & \textrm{( Since $\met{g}(t) = c(t)\varphi_{t}^{*}\met{g}_{0}$ is a solution to \eqref{generalflow})}\\
												&= \varphi_{t}^{*}\tensor{T} \left[ c(t) \met{g}_{0}\right]  &\textrm{($\tensor{T}$ is natural)}.\\
						\end{align*}
					Evaluating the above at time $t = 0$, we find 
					$c^{\prime}(0) \met{g}_{0} + \lied{\met{g}_{0}}{\vf{X}} = \tensor{T} \left[ \met{g}_{0}\right]$
					 and we conclude that $\met{g}_{0}$ is a $\tensor{T}$-soliton for the geometric flow \eqref{generalflow}.
				
				\item $\mathit{2.}$ Let $\vf{X}$ be a vector field such that $\lied{\met{g}_{0}}{\vf{X}} = \tensor{T}\left[\met{g}_{0}\right]$ and let $\varphi_{t}$ be the one-parameter family of diffeomorphisms generated by $\vf{X}$.
				It follows  that 
				\[
				\frac{\partial \varphi_{t}^{*}\met{g}_{0}}{\partial t} 
				= \varphi^{*}_{t}\left(\lied{\met{g}_{0}}{\vf{X}}\right) 
				= \varphi^{*}_{t} \tensor{T}\left[\met{g}_{0}\right] 
				= \tensor{T}\left[ \varphi^{*}_{t} \met{g}_{0}\right],
				\] and we conclude that $\met{g}(t) = \varphi^{*}_{t}\met{g}_{0}$ is a solution of \eqref{generalflow}.
				
				\item $\mathit{3.}$ Now, we assume that $\lp \mathcal{M}, \met{g}_{0}, \vf{X}, \beta \rp$ is a $\tensor{T}$-soliton structure for \eqref{generalflow} and additionally that $\tensor{T}$ is homogeneous of degree $q$.
				Define $c(t)$ to be the solution the differential equation $\frac{dc}{dt} = \beta c^{q};  c(0) = 1$ and define the time-dependent vector field $\vf{Y}_{t}$ by $\vf{Y}_{t} = c(t)^{q - 1} \vf{X}$.
				Denote the corresponding flow of $\vf{Y}_{t}$ by $\varphi_{t}$ and observe that if $\met{g}(t) = c(t) \varphi_{t}^{*} \met{g}_{0}$, then it follows from properties of Lie derivatives that
					\begin{align*}
							\frac{\partial \met{g}(t)}{\partial t} &= c^{\prime}(t) \varphi^{*}_{t}\met{g}_{0} + c(t)\varphi^{*}_{t}\left(\lied{\met{g}_{0}}{\vf{Y}}\right) & \medskip\\
												&=c^{\prime}(t) \varphi_{t}^{*}\met{g}_{0} + c(t)\varphi^{*}_{t}\left(\lied{\met{g}_{0}}{c(t)^{q - 1} \vf{X}}\right) & \lp \textrm{Defn. of } \vf{Y}\rp \medskip \\
												&= \beta c(t)^{q} \varphi_{t}^{*} \met{g}_{0} + c(t)^{q} \varphi^{*}_{t}\left(\lied{\met{g}_{0}}{\vf{X}}\right)  & \textrm{(Assumptions on $c(t)$)}\medskip\\
												\medskip																
												&=c(t)^{q}\varphi^{*}_{t}\lp \beta \met{g}_{0} +\lied{\met{g}_{0}}{\vf{X}}\rp & \textrm{( Properties of pull-backs )} \\
												\medskip
												&= c(t)^{q}\varphi^{*}_{t}\lp \tensor{T}\left[\met{g}_{0}\right]\rp & \textrm{($\lp \mathcal{M}, \met{g}_{0}, \vf{X}, \beta\rp $ is a $\tensor{T}$-soliton)} \\
												\medskip
												&= \tensor{T}\left[ c(t)\varphi^{*}_{t} \met{g}_{0}\right], &\\
					\end{align*}
				and we conclude that $\vf{T}$-solitons give rise to self-similar solutions of the flow \eqref{generalflow} whenever $\tensor{T}$ is natural and homogeneous.
				\end{description}
			 \end{proof}

			 Following the work of Lauret on the Ricci flow on nilpotent Lie groups in \cite{Lauret}, we will now define algebraic $\tensor{T}$-solitons for the geometric evolution equation \eqref{generalflow} and establish the relationship between algebraic $\tensor{T}$-solitons and $\tensor{T}$-solitons.
		Similar considerations apply in all dimensions, but for ease of exposition we restrict ourselves to dimension three and assume that
		$\lp \mathcal{M}, \met{g}\rp = \lp\mathcal{H}, \met{g}\rp$ is a simply connected three-dimensional Lie group with Lie algebra $\mathfrak{h}$ and left invariant Riemannian metric $\met{g}$.
		
		For a symmetric $(0, 2)$-tensor $\tensor{A}$ on $\lp \mathcal{M}, \met{g}\rp$, we refer to the $(1, 1)$-tensor $\widehat{\tensor{A}}$ obtained by using $\met{g}$ to raise an index as the \emph{$\tensor{A}$-operator}. 
		Note that $\widehat{\tensor{A}}$ is defined implicitly by requiring that 
		$\met{g}\left(\widehat{\tensor{A}}\left( \vf{X}\right), \vf{Y}\right) = \tensor{A}\left(\vf{X}, \vf{Y}\right)$ 
		for all vector fields $\vf{X}, \vf{Y}$ on $\mathcal{M}$, and that with respect to a given frame, the components $\widehat{A}^{i}_{j}$ of $\widehat{\tensor{A}}$ are related to the components 
		$A_{ij}$ of $\tensor{A}$ by $\widehat{A}^{i}_{j} = g^{ik}A_{kj}$.
		While $\widehat{\tensor{A}}$ depends on both $\tensor{A}$ and the $\met{g}$, the metric $\met{g}$ being used to define $\widehat{\tensor{A}}$ will always be clear from the context.
		
			\begin{definition}[Algebraic $\tensor{T}$-soliton]
			\label{algsol}
				Let $\mathcal{H}$ be  a simply connected Lie group with Lie algebra $\mathfrak{h}$ and  left invariant metric  $\met{g}$.
				The triple $\lp \mathcal{H},  \met{g}, \kappa \rp $, where $\kappa \in \mathbb{R}$,  is said to be an \emph{algebraic $\tensor{T}$-soliton} for the geometric evolution equation \eqref{generalflow} if  the operator $\tensor{D} : \mathfrak{h} \to \mathfrak{h}$ defined by
					 \begin{equation}
					 \label{algsoleq}
					 	 \tensor{D} = \widehat{\tensor{T}\left[\met{g}\right]}  - \kappa \mathrm{Id}
					 \end{equation}
				 is a derivation of $\mathfrak{h}$.
				 When there is no potential for confusion, we we will simply refer to the left invariant metric $\met{g}$ as an algebraic $\tensor{T}$-soliton.
			\end{definition}
		The following  proposition  is a straightforward adaptation from  \cite{Lauret} and  establishes the relationship between algebraic $\tensor{T}$-solitons and $\tensor{T}$-solitons.
			\begin{proposition}
			\label{algimpliessol}
			If $\mathcal{H}$ is a simply connected Lie group and the left invariant metric $\met{g}$ is an algebraic $\tensor{T}$-soliton for the geometric evolution equation \eqref{generalflow}, then $\met{g}$ is a $\tensor{T}$-soliton.			
			\end{proposition}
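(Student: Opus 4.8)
The plan is to integrate the derivation $\tensor D = \widehat{\tensor T[\met g]} - \kappa\,\mathrm{Id}$ to a one-parameter family of group automorphisms of $\mathcal H$ and read off from these the vector field $\vf X$ required by the soliton equation \eqref{solitoneq}. Since $\tensor D$ is a derivation of $\mathfrak h$, the one-parameter subgroup $e^{s\tensor D}\subset\mathrm{GL}(\mathfrak h)$ consists of Lie algebra automorphisms of $\mathfrak h$. Because $\mathcal H$ is simply connected, the natural map $\aut(\mathcal H)\to\aut(\mathfrak h)$, $\varphi\mapsto d\varphi_e$, is an isomorphism, so each $e^{s\tensor D}$ lifts to a unique Lie group automorphism of $\mathcal H$. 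Applying this to the reparametrized family $e^{\frac t2\tensor D}$, I obtain a one-parameter family of group automorphisms $\varphi_t:\mathcal H\to\mathcal H$ with $\varphi_0 = \mathrm{Id}$ and $d(\varphi_t)_e = e^{\frac t2\tensor D}$ (the factor $\tfrac12$ is inserted to fix constants at the end); let $\vf X$ be its generating vector field, $\vf X(p) = \frac{d}{dt}\big|_{t=0}\varphi_t(p)$.

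Next I would verify that each pull-back $\varphi_t^*\met g$ is again a left invariant metric. For any $a\in\mathcal H$ the automorphism identity $\varphi_t\circ L_a = L_{\varphi_t(a)}\circ\varphi_t$ (with $L_a$ left translation by $a$) gives $L_a^*\varphi_t^*\met g = \varphi_t^* L_{\varphi_t(a)}^*\met g = \varphi_t^*\met g$, using left invariance of $\met g$. Hence $\lied{\met g}{\vf X} = \frac{d}{dt}\big|_{t=0}\varphi_t^*\met g$ is the derivative of a family of left invariant tensors and is itself left invariant, so it is determined by its value on $\mathfrak h = T_e\mathcal H$. Since $(\varphi_t^*\met g)_e(u,v) = \met g\big(e^{\frac t2\tensor D}u,\, e^{\frac t2\tensor D}v\big)$ for $u,v\in\mathfrak h$, differentiating at $t=0$ yields
\[
(\lied{\met g}{\vf X})_e(u,v) = \tfrac12\big(\met g(\tensor D u, v) + \met g(u, \tensor D v)\big).
\]

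It then remains to rewrite this using the algebraic soliton condition. Because $\tensor T[\met g]$ is a symmetric $(0,2)$-tensor, its associated operator $\widehat{\tensor T[\met g]}$ is $\met g$-self-adjoint, so $\met g(\widehat{\tensor T[\met g]}u,v) + \met g(u,\widehat{\tensor T[\met g]}v) = 2\,\tensor T[\met g](u,v)$. Substituting $\tensor D = \widehat{\tensor T[\met g]} - \kappa\,\mathrm{Id}$ gives
\[
(\lied{\met g}{\vf X})_e(u,v) = \tensor T[\met g](u,v) - \kappa\,\met g(u,v).
\]
Both sides are left invariant tensors — this is where I use that $\tensor T[\met g]$ is left invariant, which is implicit in $\tensor D$ being an endomorphism of $\mathfrak h$ (and automatic when $\tensor T$ is natural) — so the identity at $e$ propagates to all of $\mathcal H$. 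Rearranging gives $\kappa\,\met g + \lied{\met g}{\vf X} = \tensor T[\met g]$, which is exactly \eqref{solitoneq} with $\beta = \kappa$; thus $\met g$ is a $\tensor T$-soliton.

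I expect the one genuinely delicate point to be the passage from the infinitesimal automorphism $\tensor D$ of $\mathfrak h$ to the honest one-parameter family of group automorphisms $\varphi_t$ of $\mathcal H$, since this is exactly where simple connectedness of $\mathcal H$ enters through the isomorphism $\aut(\mathcal H)\cong\aut(\mathfrak h)$. Everything afterward — the left invariance of $\varphi_t^*\met g$, the differentiation of the pull-back, and the bookkeeping of the scalar factor — is routine linear algebra on $\mathfrak h$.
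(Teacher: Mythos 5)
Your proposal is correct and takes essentially the same approach as the paper's own proof: exponentiate the derivation $\tensor{D} = \widehat{\tensor{T}\left[\met{g}\right]} - \kappa\,\mathrm{Id}$ (with the factor $\tfrac{1}{2}$) to a one-parameter family of automorphisms $\varphi_{t}$ of $\mathcal{H}$, take $\vf{X}$ to be its generator, and identify $\lied{\met{g}}{\vf{X}}$ with $\tensor{T}\left[\met{g}\right] - \kappa \met{g}$ using the $\met{g}$-self-adjointness of the operator. In fact your write-up is slightly more explicit than the paper's, which leaves implicit both the lift of $e^{t\tensor{D}/2} \in \aut\lp\mathfrak{h}\rp$ to $\aut\lp\mathcal{H}\rp$ via simple connectedness and the left-invariance argument that propagates the identity at $e$ to all of $\mathcal{H}$.
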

			\begin{proof}
				Assume that $\met{g}$ is an algebraic $\tensor{T}$-soliton with soliton constant of $\kappa$ and that $\bv{1}, \bv{2}, \bv{3}$ is an orthonormal basis for $\met{g}$.
				Set $\mathbf{D} = \widehat{\tensor{T}\left[\met{g}\right]} - \kappa \mathrm{Id}$ and
				define $\varphi_{t} : \mathcal{H} \to \mathcal{H}$ by setting $d\varphi_{t} = \mathrm{exp}\left(\frac{t\mathbf{D}}{2}\right)$.
				Now define a vector field $\mathbf{X}$ on $\mathcal{H}$ by setting $\vf{X}(p)= \frac{d \varphi_{t}(p)}{dt} \Big \vert_{t = 0}$, $p \in \mathcal{H}$.
				By properties of Lie derivatives, tt follows that 
					\begin{equation*}
					\lied{\met{g}}{\vf{X}} \left(\bv{i}, \bv{j}\right) =  \frac{d}{dt} \varphi_{t}^{*} \met{g}\left(\bv{i}, \bv{j}\right) \\
					= \frac{1}{2}\left(\met{g}\left(\mathbf{D}\left(\bv{i}\right), \bv{j}\right) + \met{g}\left(\bv{i}, \mathbf{D}\left(\bv{j}\right)\right)\right).
					\end{equation*}
				From the defining characteristics of $\mathbf{D}$, we find that 
					\begin{equation*}
					\frac{1}{2}\left(\met{g}\left(\mathbf{D}\left(\bv{i}\right), \bv{j}\right) + \met{g}\left(\bv{i}, \mathbf{D}\left(\bv{j}\right)\right)\right) 
					= \frac{1}{2}\left(
					\met{g}\left(\widehat{\tensor{T}\left[\met{g}\right]}\left(\bv{i}\right), \bv{j}\rp + \met{g}\left(\widehat{\tensor{T}\left[\met{g}\right]}\left(\bv{j}\right), \bv{i}\rp\rp - \kappa \met{g}\lp\bv{i}, \bv{j}\rp\\
					\end{equation*}
					 By definition of $\widehat{\tensor{T}\left[\met{g}\right]}$ and the symmetry of the metric tensor $\met{g}$, we also have that 
					\begin{align*}
						\tensor{T}\left[\met{g}\right]\left(\bv{i}, \bv{j}\rp - \kappa \met{g}\lp \bv{i}, \bv{j}\rp 
						&= \frac{1}{2}\lp \tensor{T}\left[\met{g}\right]\left(\bv{i}, \bv{j}\rp + \tensor{T}\left[\met{g}\right]\left(\bv{j}, \bv{i}\rp\rp  - \kappa \met{g}\lp \bv{i}, \bv{j}\rp\\ 
						& = \frac{1}{2}\left(
					\met{g}\left(\widehat{\tensor{T}\left[\met{g}\right]}\left(\bv{i}\right), \bv{j}\rp + \met{g}\left(\widehat{\tensor{T}\left[\met{g}\right]}\left(\bv{j}\right), \bv{i}\rp\rp - \kappa \met{g}\lp\bv{i}, \bv{j}\rp\\
					\end{align*}
			Thus we have that $ \lied{\met{g}}{\vf{X}} \left(\bv{i}, \bv{j}\right) = \tensor{T}\left[\met{g}\right]\left(\bv{i}, \bv{j}\rp - \kappa \met{g}\lp \bv{i}, \bv{j}\rp$ holds for all basis vectors $\bv{i}, \bv{j}$, and  we conclude that  $\lied{\met{g}}{\vf{X}} =  \tensor{T}\left[\met{g}\right] - \kappa \met{g}$ and that $\met{g}$ is a $\tensor{T}$-soliton for \eqref{generalflow}.
			\end{proof}
			\begin{remark}
			It  follows from \propref{solitonselfsimilar} that if $\tensor{T}$ is natural and homogeneous, then every algebraic $\tensor{T}$-soliton gives rise to a self-similar solution of the geometric evolution equation $\frac{\partial \met{g}}{\partial t} = \tensor{T}\left[\met{g}\right]$ with soliton constant $\kappa$.
			In light of this observation, we say that the algebraic $\tensor{T}$-soliton $\lp \mathcal{H}, \met{g}, \kappa\rp$ is expanding, steady, or shrinking, depending on whether or not $\kappa$ is positive, zero, or negative, respectively.
			Note that if $\tensor{T}$ is natural but not homogeneous, then a steady algebraic $\tensor{T}$-soliton gives rise to self-similar solution that evolves by diffeomorphism only.
			\end{remark}
			
			 \begin{remark}
		Note that  $\met{g}$ is a shrinking (res. expanding) soliton for the $\mathrm{+XCF}$ if and only if $\met{g}$ is an expanding (res. shrinking) soliton for the $\mathrm{-XCF}$.
		Specifically, $\lp \mathcal{M}, \met{g}, \vf{X}, \beta \rp$ is a $\mathrm{+XCF}$-soliton structure on $\mathcal{M}$ if and only if $\lp \mathcal{M}, \met{g}, -\vf{X}, -\beta \rp$ is a $\mathrm{-XCF}$ soliton structure on $\mathcal{M}$.
		This follows directly from the fact that  $\beta \met{g} + \lied{\met{g}}{\vf{X}} = 2 \ccf\left[\met{g}\right]$ if and only if $-\beta \met{g} + \lied{\met{g}}{-\vf{X}}= - 2 \ccf\left[\met{g}\right]$.
		Further note that when looking for soliton structures for the evolution equation $\frac{\partial \met{g}}{\partial t} = \tensor{T}\left[\met{g}\right]$, one can replace $\tensor{T}$ with any positive scalar multiple of $\tensor{T}$ without changing the the qualitative nature of the soliton structure.		
		In our classification of $\mathrm{XCF}$-algebraic soliton structures on three-dimensional unimodular Lie groups, we will use $\tensor{T} = \pm \ccf$ as opposed to $\tensor{T} = \pm 2 \ccf$. 
		\end{remark}
			
		Whereas the search for $\tensor{T}$-solitons often involves a complicated system of partial differential equations, algebraic $\tensor{T}$-solitons can be found via algebraic methods alone. 
		However, there are algebraic conditions that must be satisfied in order for a Lie group  $\mathcal{H}$ to support an algebraic $\tensor{T}$-soliton.
	The following proposition shows that the corresponding Lie algebra must have a derivation that can be diagonalized with a basis of eigenvectors.

	\begin{proposition}
	Let $\mathcal{H}$ be a simply connected Lie group with Lie algebra 
	$\mathfrak{h}$.
	If $\mathcal{H}$ admits an algebraic $\tensor{T}$-soliton structure for the geometric evolution equation \eqref{generalflow} and $\tensor{T}$ is natural, then $\mathfrak{h}$ admits a derivation that is diagonalizable.
	\end{proposition}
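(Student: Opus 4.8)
The plan is to show that the derivation $\mathbf{D}$ already supplied by the algebraic soliton structure is itself diagonalizable, so that no new derivation needs to be built. Assuming $\lp \mathcal{H}, \met{g}, \kappa\rp$ is an algebraic $\tensor{T}$-soliton, \defref{algsol} hands us the operator $\mathbf{D} = \widehat{\tensor{T}\left[\met{g}\right]} - \kappa \mathrm{Id}$, which is a derivation of $\mathfrak{h}$. The first thing I would record is that, since $\met{g}$ is left invariant and $\tensor{T}$ is natural, the tensor $\tensor{T}\left[\met{g}\right]$ is left invariant as well; hence both $\met{g}$ and $\tensor{T}\left[\met{g}\right]$ restrict to symmetric bilinear forms on $\mathfrak{h} = T_{e}\mathcal{H}$, with $\met{g}$ restricting to a positive-definite inner product. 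This is exactly where the naturalness hypothesis earns its keep: it is what makes $\widehat{\tensor{T}\left[\met{g}\right]}$, and therefore $\mathbf{D}$, a bona fide endomorphism of the Lie algebra rather than merely a field of operators over $\mathcal{H}$.

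The key step would be to observe that $\widehat{\tensor{T}\left[\met{g}\right]}$ is self-adjoint with respect to the inner product $\met{g}\vert_{\mathfrak{h}}$. This follows from the defining relation $\met{g}\lp \widehat{\tensor{T}\left[\met{g}\right]}\lp \vf{X}\rp, \vf{Y}\rp = \tensor{T}\left[\met{g}\right]\lp \vf{X}, \vf{Y}\rp$ together with the symmetry of $\tensor{T}\left[\met{g}\right]$ as a $(0,2)$-tensor, which gives
\begin{equation*}
\met{g}\lp \widehat{\tensor{T}\left[\met{g}\right]}\lp \vf{X}\rp, \vf{Y}\rp = \tensor{T}\left[\met{g}\right]\lp \vf{X}, \vf{Y}\rp = \tensor{T}\left[\met{g}\right]\lp \vf{Y}, \vf{X}\rp = \met{g}\lp \vf{X}, \widehat{\tensor{T}\left[\met{g}\right]}\lp \vf{Y}\rp\rp
\end{equation*}
for all $\vf{X}, \vf{Y} \in \mathfrak{h}$. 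Since subtracting $\kappa \mathrm{Id}$ clearly preserves self-adjointness, $\mathbf{D}$ is a self-adjoint operator on the Euclidean space $\lp \mathfrak{h}, \met{g}\vert_{\mathfrak{h}}\rp$.

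From here the conclusion would be immediate via the spectral theorem: a self-adjoint operator on a finite-dimensional real inner product space is orthogonally diagonalizable, so $\mathbf{D}$ possesses an orthonormal basis of real eigenvectors. As $\mathbf{D}$ is a derivation of $\mathfrak{h}$ by hypothesis, this already exhibits the required diagonalizable derivation. I do not anticipate any genuine obstacle in this argument; the only point that deserves care is the passage from the tensor field $\tensor{T}\left[\met{g}\right]$ on $\mathcal{H}$ to a symmetric operator on $\mathfrak{h}$, and that is precisely what the naturalness assumption secures.
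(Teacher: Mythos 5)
Your proof is correct and follows essentially the same route as the paper: both use naturalness of $\tensor{T}$ to see that $\tensor{T}\left[\met{g}\right]$ is left invariant and hence descends to $\mathfrak{h}$, and then apply the same linear-algebra fact to the soliton derivation $\mathbf{D} = \widehat{\tensor{T}\left[\met{g}\right]} - \kappa \mathrm{Id}$. The paper phrases this fact as the simultaneous diagonalization of the positive-definite form $\met{g}$ and the symmetric form $\tensor{T}\left[\met{g}\right]$, while you phrase it as the spectral theorem applied to the $\met{g}$-self-adjoint operator $\widehat{\tensor{T}\left[\met{g}\right]}$; these are the same argument in different packaging.
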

	
	\begin{proof}
	Suppose that $\left(\mathcal{H}, \met{g}, \kappa\right)$ is an algebraic $\tensor{T}$-soliton structure on $\mathcal{H}$ and note that the assumption of $\tensor{T}$ being natural implies that $\tensor{T}\left[\met{g}\right]$ is left invariant.
	Identifying both $\met{g}$ and $\tensor{T}\left[\met{g}\right]$ with their values on $\mathfrak{h} \simeq T_{e}\mathcal{H}$, then 
	since $\met{g}$ is positive-definite, there exists a basis $ \mathcal{B} = \left\{\bv{i} \right\}$ for $\mathfrak{h}$  that diagonalizes both $\met{g}$ and $\tensor{T}\left[\met{g}\right]$. 
	It  follows that $\widehat{\tensor{T}\left[\met{g}\right]}$ is also diagonalized with respect the indicated basis.
	The assumption that $\met{g}$ is an algebraic $\tensor{T}$-soliton implies that $\tensor{D} = \widehat{\tensor{T}\left[\met{g}\right]} - \kappa \mathrm{Id}$ is a diagonal derivation of the Lie algebra $\mathfrak{h}$ and that the basis $\mathcal{B} = \left\{ \bv{i} \right\}$ serves as a basis of eigenvectors for $\tensor{D}$.
	\end{proof}
	The following simple lemma concerning diagonal derivations  will be used extensively in the classification of $\mathrm{XCF}$ and $\mathrm{RG}$-$\mathrm{2}$ algebraic solitons  that give rise to self-similar solutions of their respective flows on three-dimensional unimodular Lie groups.
	We will state the lemma in arbitrary dimensions.
		\begin{lemma}
		\label{evlem}
		Let $\mathfrak{h}$ be a Lie algebra and suppose that $\mathbf{D}: \mathfrak{h} \to \mathfrak{h}$ is  diagonalizable  with an ordered basis of eigenvectors 
		$\mathcal{B} = \left\{ \bv{i}\right\}$ and corresponding eigenvalues $d_{i}$, $1 \le i \le n$.
		Then $\mathbf{D}$ is a derivation of $\mathfrak{h}$  if and only if for all basis vectors $\bv{i}$ and $\bv{j}$, we have that $\left[\bv{i}, \bv{j}\right] = 0$ or $\left[\bv{i}, \bv{j}\right]$ is an eigenvector with eigenvalue $d_{i} + d_{j}$.	
		\end{lemma}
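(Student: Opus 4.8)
The plan is to reduce the derivation condition to a statement about pairs of eigenbasis vectors, exploiting that both sides of the Leibniz identity are bilinear. First I would recall that $\mathbf{D}$ is a derivation of $\mathfrak{h}$ precisely when $\mathbf{D}\lp\left[\vf{X},\vf{Y}\right]\rp = \left[\mathbf{D}\vf{X}, \vf{Y}\right] + \left[\vf{X}, \mathbf{D}\vf{Y}\right]$ holds for all $\vf{X}, \vf{Y} \in \mathfrak{h}$. Expanding $\vf{X}$ and $\vf{Y}$ in the basis $\mathcal{B}$ and using linearity of $\mathbf{D}$ together with bilinearity of the bracket, each side of this identity becomes a sum over $i,j$ whose coefficients are the same products of the components of $\vf{X}$ and $\vf{Y}$. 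Consequently the identity holds for all $\vf{X}, \vf{Y}$ if and only if it holds for every pair of basis vectors $\bv{i}, \bv{j}$, and it suffices to analyze the identity on such pairs.

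Next I would simplify the right-hand side of the Leibniz identity on basis vectors. Since $\bv{i}$ and $\bv{j}$ are eigenvectors with $\mathbf{D}\bv{i} = d_i \bv{i}$ and $\mathbf{D}\bv{j} = d_j \bv{j}$, bilinearity of the bracket gives
\[
\left[\mathbf{D}\bv{i}, \bv{j}\right] + \left[\bv{i}, \mathbf{D}\bv{j}\right] = d_i\left[\bv{i},\bv{j}\right] + d_j\left[\bv{i},\bv{j}\right] = (d_i + d_j)\left[\bv{i},\bv{j}\right].
\]
Thus the Leibniz identity on the pair $(\bv{i},\bv{j})$ is equivalent to the single equation $\mathbf{D}\lp\left[\bv{i},\bv{j}\right]\rp = (d_i + d_j)\left[\bv{i},\bv{j}\right]$, and the whole lemma amounts to interpreting this equation.

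Finally I would read off both implications from this equation. If $\mathbf{D}$ is a derivation, the displayed equation holds for every $i,j$, so the vector $v = \left[\bv{i},\bv{j}\right]$ satisfies $\mathbf{D}v = (d_i+d_j)v$; hence either $v = 0$ or $v$ is an eigenvector of $\mathbf{D}$ with eigenvalue $d_i + d_j$, which is the stated condition. Conversely, if each bracket $\left[\bv{i},\bv{j}\right]$ is either zero or an eigenvector with eigenvalue $d_i + d_j$, then in both cases $\mathbf{D}\lp\left[\bv{i},\bv{j}\right]\rp = (d_i+d_j)\left[\bv{i},\bv{j}\right]$ holds, so the Leibniz identity is satisfied on all basis pairs and, by the reduction above, on all of $\mathfrak{h}$. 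There is no genuine obstacle here; the only point requiring a moment of care is the bilinearity reduction to basis vectors and the observation that a vanishing bracket is accommodated by the first alternative (with the eigenvalue condition then reading $0 = 0$ and becoming vacuous). Everything else is a direct computation.
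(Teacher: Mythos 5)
Your proof is correct. The paper actually states Lemma~\ref{evlem} without any proof (it is introduced only as a ``simple lemma'' and then used repeatedly), and your argument---reducing the Leibniz identity to basis pairs by bilinearity, computing $\left[\mathbf{D}\bv{i},\bv{j}\right]+\left[\bv{i},\mathbf{D}\bv{j}\right]=(d_{i}+d_{j})\left[\bv{i},\bv{j}\right]$ on eigenvectors, and reading off both implications from $\mathbf{D}\lp\left[\bv{i},\bv{j}\right]\rp=(d_{i}+d_{j})\left[\bv{i},\bv{j}\right]$---is exactly the standard argument the authors are implicitly relying on, so it fills the omission cleanly.
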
 
	As it pertains to algebraic $\tensor{T}$-solitons, there are two extreme cases concerning the eigenvalues of a diagonal derivation on a Lie algebra $\mathfrak{h}$ that we will mention briefly.
	The first is the case where $\mathbf{D} = \widehat{\tensor{T}\left[\met{g}\right]} - \kappa \mathrm{Id}$ has only one eigenvalue (i.e.,  a repeated eigenvalue of multiplicity $n = \dim \mathfrak{h}$).
	In this case,  $\tensor{T}\left[\met{g}\right]$ is a scalar multiple of the metric $\met{g}$.
	Note that a self-similar solution of the indicated form evolves by scaling only and there is no diffeomorphism action.
	Such a metric would (algebraically speaking) play the role for $\tensor{T}$ that an Einstein metric does for  $\ric$.
	
	The second case is when all eigenvalues of the diagonal derivation $\mathbf{D} = \widehat{\tensor{T}\left[\met{g}\right]} - \kappa \mathrm{Id}$ have multiplicity one.
	In this case, the Lie algebra $\mathfrak{h}$ must admit a \emph{nice basis}.
	Following \cite{LauretNice}, we say that a basis $\mathcal{B} = \left\{ \bv{i}\right\}$ for a Lie algebra $\mathfrak{h}$ is \emph{nice} if the structure constants defined by $\left[ \bv{i}, \bv{j} \right] = c_{ij}^{k} \bv{k}$ satisfy
		\begin{itemize}
			\item for all $i, j$, there exists at most one $k$ such that $c_{ij}^{k} \ne 0$, and
			
			\item for all $i, k$, there exists at most one $j$ such that $c_{ij}^{k} \ne 0$.
		\end{itemize}
	The condition on a basis of $\mathfrak{h}$ being nice can thusly be interpreted as requiring the Lie bracket of any two basis basis vectors $\bv{i}$ and $\bv{j}$  be zero or belong to the span of a third basis vector $\bv{k}$, and two nonzero brackets $\left[ \bv{i}, \bv{j} \right]$ and $\left[ \bv{p}, \bv{q}\right]$ are non-zero scalar multiples of each other if and only if $\left\{i, j\right\} = \left\{p, q\right\}$ or $\left\{i, j\right\}$ and $\left\{p, q\right\}$ are disjoint.
	It follows from  \lemref{evlem} that if  $\mathbf{D} = \widehat{\tensor{T}\left[\met{g}\right]} - \kappa \mathrm{Id}$ is a diagonal derivation with distinct eigenvalues, then the Lie algebra $\mathfrak{h}$ must admit a nice basis. 
	Note that all three-dimensional unimodular Lie groups admit a nice basis for their Lie algebras (see \secref{Milnor}).
	See \cite{LauretNice} for further discussion of Lie algebras admitting a ``nice basis'' and the relationship between a ``nice basis" and stably Ricci diagonal flows as introduced in \cite{Payne} by Payne.

	\section{Algebraic Solitons on Three-Dimensional Unimodular Lie Groups}
	\label{algsolliegroup}
	\subsection{Milnor frames}
	\label{Milnor}
	Let $\mathcal{H}$ be a three-dimensional  unimodular Lie group with Lie algebra $\mathfrak{h}$ and left invariant metric $\met{g}$.
	Throughout what follows, $\bv{1}, \bv{2}, \bv{3}$ will be a left invariant frame  with dual co-frame denoted by $\cv{1}, \cv{2},$ and $\cv{3}$.
	
	In \cite{Milnor}, Milnor establishes the existence of a left invariant  $\met{g}$-orthonormal frame $\bv{1}, \bv{2}, \bv{3}$ for $\mathfrak{h}$ such that the algebraic structure of $\mathfrak{h}$ is determined by the non-zero bracket relations
		\begin{equation}
		\label{structure}
			\left[\bv{2}, \bv{3}\right] = \lambda^{1} \bv{1} \hspace{.5in} \left[\bv{3}, \bv{1}\right] = \lambda^{2}\bv{2} \hspace{.5in} \left[\bv{1}, \bv{2}\right] =\lambda^{3} \bv{3}.
		\end{equation}
	The Levi-Civita connection of $\met{g}$ is completely determined by the Koszul formula, and with respect to the indicated frame the defining covariant derivatives are
		\begin{center}
		\begin{tabular}{l l l}
		\label{connection}
			$\cov{\bv{1}}{\bv{2}} = \mu^{1} \bv{3}$\quad\quad\quad  &$\cov{\bv{1}}{\bv{3}} =-\mu^{1} \bv{2}$ \quad\quad\quad\quad &$\cov{\bv{2}}{\bv{3}} = \mu^{2} \bv{1}$ \\
				&&\\
				$\cov{\bv{3}}{\bv{2}} = -\mu^{3}\bv{1}$ \quad\quad\quad  &$\cov{\bv{3}}{\bv{1}} = \mu^{3}\bv{2}$ \quad\quad\quad\quad  &$\cov{\bv{2}}{\bv{1}} =- \mu^{2} \bv{3}$,\\
		\end{tabular}
		\end{center}
		where $\mu^{i} = \frac{1}{2}\lp \lambda^{1} + \lambda^{2} + \lambda^{3}\rp - \lambda^{i}$.
	The principal sectional curvatures are thusly given by	
	\begin{equation}
		\label{sectional1}
			K_{l} = K\lp\bv{m} \wedge \bv{n}\rp = \lambda^{l}\mu^{l} - \mu^{m}\mu^{n}, \hspace{.5in} \textrm{$\lp l, m, n\rp$ is a permutation of $\lp 1, 2, 3 \rp$}.
		\end{equation}
	The Ricci tensor $\ric\left[\met{g}\right] = R_{ij}\cv{i} \otimes \cv{j}$ is diagonalized with respect to the indicated frame, and the  component functions take the form	
		\begin{equation}
		\label{ricci}
			R_{ll} = 2\mu^{m}\mu^{n} = K_{m} + K_{n}, \hspace{.5in}  \textrm{$(l, m, n)$ is a permutation of $(1, 2, 3)$.}
		\end{equation}
		
	Following \cite{Milnor}, we observe that if the metric $\met{g}$ is altered by declaring the basis
		\[
		\av{1} = BC\bv{1}, \hspace{.25in} \av{2} = AC \bv{2}, \hspace{.25in} \av{3} = AB\bv{3}, \hspace{.5in} A, B, C \in \mathbb{R}_{> 0}
		\]
	 to be orthonormal, then we find that the resulting structure constants are all scaled by positive constants:
	 	\[
			\left[\av{2}, \av{3}\right] = A^{2}\lambda_{1}\av{1}, \hspace{.5in} \left[\av{3}, \av{1}\right] = B^{2}\lambda_{2}\av{2}, \hspace{.5in} \left[\av{1}, \av{2}\right] = C^{2}\lambda_{3} \av{3}.
		\]
	As such, we can assume that the left invariant metric $\met{g}$ is expressed relative to  a left invariant frame $\bv{1}, \bv{2}, \bv{3}$ and its dual co-frame $\cv{1}, \cv{2}, \cv{3}$ as $\met{g} = A\, \meti{1} + B\, \meti{2} + C\, \meti{3}$, with the structure constants \eqref{structure} defining the Lie algebra satisfying $\lambda^{i} \in \left\{1, 0, -1\right\}$.
	Further, by assuming an orientation for the Lie algebra $\mathfrak{h}$, we can assume that there at least as many positive structure constants as negative structure constants, and that by appropriately ordering our basis we have $\lambda^{1} \ge \lambda^{2} \ge \lambda^{3}$.
	
	Given a left invariant metric $\met{g}$ on $\mathcal{H}$, we will refer to a left invariant frame $\bv{1}, \bv{2},$ and $\bv{3}$ that 
		\begin{enumerate}
		\item   diagonalizes the metric $\met{g}$, (i.e., $\met{g} = A\, \meti{1} + B\, \meti{2} + C\, \meti{3}$), and
		\item  diagonalizes the structure constants of $\mathfrak{h}$ as in \eqref{structure} with  $\lambda_{i} \in \left\{1, 0, -1\right\}$ and $\lambda^{1} \ge \lambda^{2} \ge \lambda^{3}$,
		\end{enumerate}
	as a \emph{Milnor frame} for $\met{g}$.
	In what follows, we will work exclusively with Milnor frames for $\met{g}$.
	
	The six possibilities for the structure constants of (oriented) Lie algebra $\mathfrak{h}$ corresponding to a simply connected three-dimensional unimodular Lie group are recorded in the table below:
		\begin{center}
		\begin{tabular}{ | c | c |}
		\hline
		$\lp \lambda^{1} , \lambda^{2}, \lambda^{3}\rp$ & Lie group \\ \hline
		$\lp 1, 1, 1 \rp$	& $\mathrm{SU}(2)$\\ \hline
		$\lp 1, 1, -1 \rp$ & $\widetilde{SL(2, \mathbb{R})}$\\ \hline
		$\lp 1, 1, 0\rp $	& $E(2$)\\ \hline
		$\lp 1, 0, -1\rp$	& $\mathrm{E}(1,1)$\\ \hline
		$\lp 1, 0, 0\rp$	& 3-dim Heisenberg group\\ \hline
		$\lp 0, 0, 0 \rp$	& $\mathbb{R}^{3}$\\ \hline
		\end{tabular}
		\end{center}
	Note that if  $\met{g}$ is a left invariant metric on $\mathcal{H}$ expressed in a Milnor frame as $\met{g} = A \meti{1} + B\meti{2} + C\meti{3}$, then the principal sectional curvatures are given by
	 \begin{equation}
		\label{sectional}
			K_{l} = K\lp\bv{m} \wedge \bv{n}\rp = \tilde{\lambda}^{l}\tilde{\mu}^{l} - \tilde{\mu}^{m}\tilde{\mu}^{n}, \hspace{.5in} \textrm{$\lp l, m, n\rp$ is a permutation of $\lp 1, 2, 3 \rp$},
		\end{equation}
	where $\tilde{\lambda}^{1} = \sqrt{\frac{A}{BC}}\lambda^{1}$, 
	$\tilde{\lambda}^{2} = \sqrt{\frac{B}{CA}}\lambda^{2}$, $\tilde{\lambda}^{3} = \sqrt{\frac{A}{BC}}\lambda^{3}$, and 
	$\tilde{\mu}^{i} = \frac{1}{2}\lp \tilde{\lambda}^{1} + \tilde{\lambda}^{2} + \tilde{\lambda}^{3}\rp  - \tilde{\lambda}^{i}$,  $1 \le i \le 3$.
	Furthermore,  the components of the Ricci tensor $\ric\left[\met{g}\right] = R_{11} \meti{1} + R_{22} \meti{2} + R_{33} \meti{3}$ are given by
		\begin{equation}
		\label{ricciorthogonal}
			R_{11} = A\lp K_{2} + K_{3}\rp, \hspace{.5in} R_{22} = B\lp K_{1} + K_{3} \rp , \hspace{.5in} R_{33} = C\lp K_{1} + K_{2} \rp,
		\end{equation}
	 and the tensor $\tensor{Rm}^{2}\left[\met{g}\right] = \check{R}_{ij}\tp{i}{j}$ is diagonalized with components functions given  by
		\begin{equation}
		\label{rm2generic}
		\begin{cases}
		\medskip
		\check{R}_{11} &= 2A\left(\left(K_{2}\right)^2 + \left(K_{3}\right)^2\right) \\ 
		\medskip
		\check{R}_{22} &= 2B\left(\left(K_{3}\right)^2 + \left(K_{1}\right)^2\right) \\
		\check{R}_{33} &= 2C\left(\left(K_{3}\right)^2 + \left(K_{1}\right)^2\right).
		\end{cases}
		\end{equation}
	It follows that cross curvature tensor $\ccf\left[\met{g}\right] = H_{ij} \tp{i}{j}$,  $\mathrm{RG}$-$\mathrm{2}$ tensor $\rg\left[\met{g}\right] = RG_{ij}\tp{i}{j}$ are all diagonalized with respect to a Milnor frame.
	The components  are, respectively, 
	
		\begin{equation}
		\label{xcfgeneric}
		H_{11} = AK_{2}K_{3}, \quad H_{22} = BK_{1}K_{3}, \quad \textrm{and } \quad H_{33} = CK_{1}K_{2},
		\end{equation}
		and 
		\begin{equation}
		\label{rg2generic}
		\begin{cases}
		\medskip
		RG_{11} &= -A\left(2\left(K_{2} + K_{3}\right) + \alpha \left(\left(K_{2}\right)^2 + \left(K_{3}\right)^2\right)\right)\\
		\medskip
		RG_{22} &= -B\left(2\left(K_{3} + K_{1}\right) + \alpha \left(\left(K_{3}\right)^2 + \left(K_{1}\right)^2\right)\right)\\ 
		RG_{33} &= -C\left(2\left(K_{1} + K_{2}\right) + \alpha \left(\left(K_{1}\right)^2 + \left(K_{2}\right)^2\right)\right).
		\end{cases}
		\end{equation}
		
	Finally, we note that the corresponding operators $\ccfop\left[\met{g}\right]$ and  $\rgop\left[\met{g}\right]$ are diagonalized with respect to the indicated frame and take the form
		\begin{align}
		\label{operatorgeneric}
		\ccfop\left[\met{g}\right] &= \diag\left(K_{2}K_{3}\,,\, K_{3}K_{1}\,,\, K_{1}K_{2}\right)  \\
		 \rgop\left[\met{g}\right] &= \diag \left(\frac{RG_{11}}{A}, \frac{RG_{22}}{B}, \frac{RG_{33}}{C}\right) \nonumber.
		\end{align}
	
	We will now classify the algebraic solitons on three-dimensional unimodular Lie groups that give rise to self-similar solutions of the $\mathrm{XCF}$ and $\mathrm{RG}$-$\mathrm{2}$ flows.
	Before proceeding, recall from \propref{solitonselfsimilar} and \propref{algimpliessol} that all algebraic $\mathrm{XCF}$-solitons give rise to self-similar solutions of the $\mathrm{XCF}$ flow, whereas only steady algebraic $\mathrm{RG}$-$\mathrm{2}$-solitons give rise to self-similar solutions of the $\mathrm{RG}$-$\mathrm{2}$ flow.
	 
	\subsection{$\mathbb{R}^{3}$}
	\label{r3}
	
	We begin with the trivial case of $\mathcal{H} = \mathbb{R}^{3}$. 
	Since all left invariant metrics on the Abelian group $\mathbb{R}^{3}$ are flat, then it follows immediately that all left invariant metrics $\met{g}$ are fixed points for the $\mathrm{XCF}$ and $\mathrm{RG}$-$\mathrm{2}$ flow.
	Such metrics can be regarded as trivial solitons.

	\subsection{Heisenberg Group}
		\label{Heis}
		Let $\met{g}$ be a left invariant metric for the three-dimensional Heisenberg group and let $\bv{1}, \bv{2}, \bv{3}$ be a Milnor frame such that 
		$\met{g}= A \meti{1} + B \meti{2} + C \meti{3}$ and the Lie algebra structure is determined by the non-zero bracket  $\left[ \bv{2}, \bv{3}\right] = \bv{1}$.
		By using an automorphism of the Lie algebra, one can further assume that $ B = C = 1$ and that the metric takes the form $\met{g}	= A \meti{1} +  \meti{2} +  \meti{3}$.
		
		From \eqref{sectional} we find that  the principal sectional curvatures are
			\begin{equation}
			\label{sectionalheis}
			K_{1} = K\left(\bv{2} \wedge \bv{3}\right) = - \frac{3}{4} A, \hspace{.25in}
			K_{2}  = K\left(\bv{1} \wedge \bv{3}\right) = \frac{1}{4} A, \hspace{.25in}
			K_{3} = K\left(\bv{1} \wedge \bv{2}\right) = \frac{1}{4}A.
			\end{equation}
		
		We will now make use of the algebraic structure to give a simple criterion for when a left invariant metric $\met{g}$ on the Heisenberg group is an algebraic $\tensor{T}$-soliton for the evolution equation $\frac{\partial \met{g}}{\partial t} = \tensor{T}\left[\met{g}\right]$.
		
		\begin{lemma}
		\label{heislemma}
		Let $\met{g} = A \meti{1} + \meti{2} + \meti{3}$ be a left invariant metric (expressed relative to a Milnor frame) on the three-dimensional Heisenberg group.
		If $\widehat{\tensor{T}}\left[\met{g}\right]$ is diagonalized relative the Milnor frame, then $\met{g}$ is an algebraic $\tensor{T}$-soliton for the evolution equation 
		$\frac{\partial \met{g}}{\partial t} = \tensor{T}\left[\met{g}\right]$ with soliton constant $\kappa = \widehat{T}^{2}_{2} + \widehat{T}^{3}_{3} - \widehat{T}^{1}_{1}$.
		\end{lemma}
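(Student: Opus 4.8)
The plan is to verify the algebraic soliton condition directly via \lemref{evlem}. Since the hypothesis guarantees that $\widehat{\tensor{T}}[\met{g}]$ is diagonalized relative to the Milnor frame $\bv{1}, \bv{2}, \bv{3}$, the operator $\tensor{D} = \widehat{\tensor{T}\left[\met{g}\right]} - \kappa \mathrm{Id}$ is also diagonal with respect to this frame, with eigenvectors $\bv{i}$ and eigenvalues $d_{i} = \widehat{T}^{i}_{i} - \kappa$. Thus $\tensor{D}$ is automatically diagonalizable and the hypotheses of \lemref{evlem} are met for any choice of $\kappa$; what remains is to select $\kappa$ so that the bracket condition of \lemref{evlem} holds, which by \defref{algsol} is exactly what is needed to make $\met{g}$ an algebraic $\tensor{T}$-soliton.

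Next I would enumerate the bracket relations in this frame. For the Heisenberg algebra we have $\lambda^{1} = 1$ and $\lambda^{2} = \lambda^{3} = 0$, so by \eqref{structure} the only non-zero bracket is $\left[\bv{2}, \bv{3}\right] = \bv{1}$, while every other bracket of basis vectors vanishes. By \lemref{evlem}, $\tensor{D}$ is a derivation precisely when each non-zero bracket $\left[\bv{i}, \bv{j}\right]$ is an eigenvector of $\tensor{D}$ with eigenvalue $d_{i} + d_{j}$. The vanishing brackets impose no condition, so the entire derivation requirement collapses to the single constraint arising from $\left[\bv{2}, \bv{3}\right] = \bv{1}$: since $\bv{1}$ is an eigenvector of $\tensor{D}$ with eigenvalue $d_{1}$, we need $d_{1} = d_{2} + d_{3}$.

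Finally I would solve this constraint for $\kappa$. Substituting $d_{i} = \widehat{T}^{i}_{i} - \kappa$ into $d_{1} = d_{2} + d_{3}$ yields $\widehat{T}^{1}_{1} - \kappa = \left(\widehat{T}^{2}_{2} - \kappa\right) + \left(\widehat{T}^{3}_{3} - \kappa\right)$, which rearranges to $\kappa = \widehat{T}^{2}_{2} + \widehat{T}^{3}_{3} - \widehat{T}^{1}_{1}$. With this value of $\kappa$, the bracket condition of \lemref{evlem} is satisfied, so $\tensor{D}$ is a derivation and $\met{g}$ is an algebraic $\tensor{T}$-soliton with the stated soliton constant. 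There is no real obstacle here beyond the bookkeeping already done; the point worth emphasizing is that the single non-zero structure constant of the Heisenberg algebra forces $\kappa$ to be \emph{uniquely} determined, which is precisely why the soliton constant emerges in the clean closed form $\widehat{T}^{2}_{2} + \widehat{T}^{3}_{3} - \widehat{T}^{1}_{1}$.
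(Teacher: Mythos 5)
Your proposal is correct and follows essentially the same route as the paper's own proof: both invoke \lemref{evlem}, reduce the derivation condition to the single constraint $d_{1} = d_{2} + d_{3}$ coming from the lone non-zero bracket $\left[\bv{2}, \bv{3}\right] = \bv{1}$, and solve for $\kappa = \widehat{T}^{2}_{2} + \widehat{T}^{3}_{3} - \widehat{T}^{1}_{1}$. Your added remarks (that vanishing brackets impose no conditions and that $\kappa$ is uniquely determined) are fine elaborations but do not change the argument.
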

		
		\begin{proof}
		Let $\tensor{D}\left[\met{g}\right] = \widehat{\tensor{T}}\left[\met{g}\right] - \kappa \mathrm{Id}$ and assume that 
		$\widehat{\tensor{T}}\left[\met{g}\right] : \mathfrak{h} \to \mathfrak{h}$ is diagonalized relative to the Milnor frame.
		Owing to the Lie bracket structure of the Lie algebra $\mathfrak{h}$, it follows from \lemref{evlem} that the operator $\tensor{D} = \lp d^{i}_{j} \rp = \lp \widehat{T}^{i}_{j}- \kappa \delta^{i}_{j}\rp$ is a derivation of $\mathfrak{h}$ if and only if $\left[ \bv{2}, \bv{3} \right] = \bv{1}$ is an eigenvector for $\tensor{D}$ with eigenvalue $d^{1}_{1} = d^{2}_{2} + d^{3}_{3}$.
		Accordingly, we must have $\widehat{T}^{1}_{1} - \kappa = \lp \widehat{T}^{2}_{2} -\kappa \rp +   \lp \widehat{T}^{3}_{3} -\kappa \rp$, or equivalently, $\kappa  =  \widehat{T}^{2}_{2} + \widehat{T}^{3}_{3} - \widehat{T}^{1}_{1}$.
		This completes the proof.
		\end{proof}
		
		The classification of algebraic $\mathrm{XCF}$ and $\mathrm{RG}$ solitons on the Heisenberg group that give rise to self-similar solutions of their respective flows now follows easily.
		\subsubsection{Algebraic XCF-solitons} 
		From \eqref{xcfgeneric} and \eqref{sectionalheis}, we find that the cross curvature tensor is $\ccf\left[\met{g}\right]  = \frac{A^{3}}{16}  \tp{1}{1}  -\frac{3A^{2}}{16} \tp{2}{2} -\frac{3A^{2}}{16}\tp{3}{3}$, and the corresponding cross curvature operator $\ccfop\left[\met{g}\right]$ is represented in the given frame by
			\begin{equation}
				\label{Heisxcfop}
				\ccfop\left[\met{g}\right] = \diag\left(\frac{A^{2}}{16}, -\frac{3 A^2}{16}, -\frac{3 A^2}{16}\right).
			\end{equation}

		\begin{theorem}
		For all choices of $A$, the left invariant metric $\met{g} = A \meti{1} +  \meti{2} +  \meti{3}$ is a shrinking algebraic $\mathrm{XCF}$ soliton for the $\mathrm{+XCF}$ on the Heisenberg group. (And likewise, an expanding algebraic $\mathrm{-XCF}$ soliton.)  
		\end{theorem}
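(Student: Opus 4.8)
The plan is to reduce the statement to a direct application of \lemref{heislemma}, since all of the analytic work has already been carried out in computing the cross curvature operator. The hypothesis of \lemref{heislemma} requires only that $\ccfop\left[\met{g}\right]$ be diagonalized with respect to the Milnor frame, and this is precisely the content of \eqref{Heisxcfop}. Taking $\tensor{T} = \ccf$ (which, by the preceding remark, has the same qualitative soliton behavior as $\tensor{T} = 2\ccf$ defining the $\mathrm{+XCF}$), \lemref{heislemma} immediately yields that $\met{g}$ is an algebraic $\mathrm{XCF}$-soliton with soliton constant
\[
\kappa = \widehat{H}^{2}_{2} + \widehat{H}^{3}_{3} - \widehat{H}^{1}_{1}.
\]

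First I would read off the diagonal entries of $\ccfop\left[\met{g}\right]$ from \eqref{Heisxcfop}, namely $\widehat{H}^{1}_{1} = \frac{A^{2}}{16}$ and $\widehat{H}^{2}_{2} = \widehat{H}^{3}_{3} = -\frac{3A^{2}}{16}$. Substituting these into the formula for $\kappa$ gives
\[
\kappa = -\frac{3A^{2}}{16} - \frac{3A^{2}}{16} - \frac{A^{2}}{16} = -\frac{7A^{2}}{16}.
\]
Since $A \in \mathbb{R}_{>0}$, we have $A^{2} > 0$ and hence $\kappa < 0$ for every choice of $A$, so the soliton is shrinking by the convention introduced after \propref{algimpliessol}. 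This establishes the $\mathrm{+XCF}$ claim.

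For the parenthetical $\mathrm{-XCF}$ statement, I would invoke the remark that $\lp \mathcal{M}, \met{g}, \vf{X}, \beta\rp$ is a $\mathrm{+XCF}$-soliton structure if and only if $\lp \mathcal{M}, \met{g}, -\vf{X}, -\beta\rp$ is a $\mathrm{-XCF}$-soliton structure. Equivalently, at the algebraic level, replacing $\tensor{T} = \ccf$ by $-\ccf$ negates $\ccfop\left[\met{g}\right]$ while keeping it diagonal, so \lemref{heislemma} again applies and returns the negated constant $-\kappa = \frac{7A^{2}}{16} > 0$. Thus $\met{g}$ is an expanding algebraic $\mathrm{-XCF}$-soliton, as claimed.

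There is essentially no obstacle here: the theorem is a sign computation downstream of the diagonalization \eqref{Heisxcfop} and the derivation criterion \lemref{heislemma}. The only point requiring any care is confirming that the diagonal form of $\ccfop\left[\met{g}\right]$ genuinely satisfies the hypothesis of \lemref{heislemma}, so that the derivation $\tensor{D} = \ccfop\left[\met{g}\right] - \kappa\,\mathrm{Id}$ has $\bv{1}, \bv{2}, \bv{3}$ as eigenvectors and the single bracket relation $\left[\bv{2}, \bv{3}\right] = \bv{1}$ forces exactly the eigenvalue relation $d^{1}_{1} = d^{2}_{2} + d^{3}_{3}$; this is immediate from \eqref{Heisxcfop} and \eqref{sectionalheis}.
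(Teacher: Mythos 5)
Your proposal is correct and follows essentially the same route as the paper: both apply \lemref{heislemma} to the diagonal operator $\ccfop\left[\met{g}\right]$ from \eqref{Heisxcfop} to obtain $\kappa = \widehat{H}^{2}_{2} + \widehat{H}^{3}_{3} - \widehat{H}^{1}_{1} = -\frac{7}{16}A^{2} < 0$, hence a shrinking soliton for the $\mathrm{+XCF}$. Your explicit sign-flip argument for the $\mathrm{-XCF}$ parenthetical is a minor addition the paper leaves to its earlier remark, but it is the same reasoning.
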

		
			\begin{proof}
			Let $\mathbf{D}\left[\met{g}\right] =  \ccfop\left[\met{g}\right] - \kappa \mathrm{Id}$, where $\ccfop = \ccfop\left[\met{g}\right]$ is as in \eqref{Heisxcfop} and $\kappa \in \mathbb{R}$.
			 The operator $\mathbf{D} = \lp d^{i}_{j}\rp = \lp \widehat{H}^{i}_{j} - \kappa \delta^{i}_{j}\rp$ is diagonalized with respect to the given frame, and applying \lemref{heislemma} we find that $\met{g}$ is an algebraic soliton with soliton constant  $\kappa = \widehat{H}^{2}_{2} + \widehat{H}^{3}_{3} - \widehat{H}^{1}_{1} = - \frac{7}{16}A^2$, which completes the proof.
			\end{proof}

		\subsubsection{Algebraic $\mathrm{RG}$-$\mathrm{2}$ Solitons}
		\label{rg2solheis}
			
			Combining \eqref{rg2generic} and \eqref{sectionalheis}, the $\mathrm{RG}$-$\mathrm{2}$ tensor  of the metric $\met{g}$ is 					
					
					\begin{equation}
					\rg\left[\met{g}\right]=  -\left(\frac{1}{8} \alpha A^{3} + A^{2}\right)\meti{1}  +\lp A - \frac{5}{8}\alpha A^{2} \rp \meti{2}  +\lp A - \frac{5}{8} \alpha A^{2}  \rp\meti{3},
					\end{equation}
			and the corresponding RG-2 operator is $\widehat{\rg}\left[\met{g}\right] = \diag\left( - \frac{\alpha}{8}A^2 - A, A  - \frac{5}{8}\alpha A^{2}, A -\frac{5}{8} \alpha A^{2} \right)$. 
			We immediately establish the following.
				\begin{theorem}
					\label{rg2heissol}
				The left invariant metric $\met{g} = A \meti{1} + \meti{2} + \meti{3}$ is a steady algebraic soliton that gives rise to a self-similar solution of the $\mathrm{RG}$-$\mathrm{2}$ flow 
				$ \frac{\partial \met{g}}{\partial t} = -2 \ric\left[\met{g}\right] - \frac{4}{3A}\rg\left[\met{g}\right]$ (i.e., when $\alpha = \frac{8}{3A}$).
				\end{theorem}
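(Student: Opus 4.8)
The plan is to read off the soliton constant directly from \lemref{heislemma} and then impose the steadiness condition that is forced by the non-homogeneity of $\rg$. Since the RG-2 operator $\rgop\left[\met{g}\right]$ computed just above the theorem is already diagonalized with respect to the Milnor frame, \lemref{heislemma} applies verbatim with $\tensor{T} = \rg$: for every $A > 0$ and every value of the coupling constant $\alpha$, the metric $\met{g} = A\,\meti{1} + \meti{2} + \meti{3}$ is automatically an algebraic $\rg$-soliton, with soliton constant $\kappa = \widehat{RG}^{2}_{2} + \widehat{RG}^{3}_{3} - \widehat{RG}^{1}_{1}$. So the content of the theorem is not whether $\met{g}$ is an algebraic soliton, but for which $\alpha$ it is a \emph{steady} one.

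First I would substitute the diagonal entries of $\rgop\left[\met{g}\right]$ into the formula supplied by \lemref{heislemma}. Using $\widehat{RG}^{1}_{1} = -\tfrac{\alpha}{8}A^{2} - A$ and $\widehat{RG}^{2}_{2} = \widehat{RG}^{3}_{3} = A - \tfrac{5}{8}\alpha A^{2}$, a short computation gives $\kappa = 3A - \tfrac{9}{8}\alpha A^{2}$. Since $\rg$ is natural but fails to be homogeneous (it is assembled from two homogeneous pieces of different degrees, as recorded in the remark following \defref{natural}), the remark after \propref{algimpliessol} tells us that only a \emph{steady} algebraic $\rg$-soliton yields a self-similar solution of the flow. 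I would therefore impose $\kappa = 0$.

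Solving $3A - \tfrac{9}{8}\alpha A^{2} = 0$ for $A > 0$ gives $\alpha = \tfrac{8}{3A}$, at which value $\tfrac{\alpha}{2} = \tfrac{4}{3A}$ and the RG-2 flow takes the stated form. At this $\alpha$ the operator $\tensor{D} = \rgop\left[\met{g}\right] - \kappa\,\mathrm{Id} = \rgop\left[\met{g}\right]$ is, by \lemref{heislemma} (which is itself \lemref{evlem} specialized to the single nonzero bracket of the Heisenberg algebra), a derivation of $\mathfrak{h}$, so $\met{g}$ is a steady algebraic $\rg$-soliton. By \propref{algimpliessol} it is then a genuine $\rg$-soliton, and by the remark after \propref{algimpliessol} a steady soliton of a natural flow evolves purely by diffeomorphism, producing the desired self-similar solution.

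There is no serious obstacle here: the curvature bookkeeping has already been done in assembling $\rgop\left[\met{g}\right]$, and \lemref{heislemma} collapses the derivation condition to the single scalar identity $d^{1}_{1} = d^{2}_{2} + d^{3}_{3}$. The one conceptual point worth flagging is that, unlike the $\mathrm{XCF}$ case where homogeneity makes \emph{every} algebraic soliton self-similar, here the lack of homogeneity of $\rg$ is exactly what singles out the distinguished coupling $\alpha = \tfrac{8}{3A}$, namely the unique value for which the soliton constant vanishes.
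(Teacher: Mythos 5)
Your proposal is correct and takes essentially the same approach as the paper: both invoke \lemref{heislemma} and impose steadiness, and your condition $\kappa = 3A - \tfrac{9}{8}\alpha A^{2} = 0$ is exactly the paper's derivation condition $\widehat{RG}^{1}_{1} = \widehat{RG}^{2}_{2} + \widehat{RG}^{3}_{3}$, which the paper writes as $3A\lp \tfrac{3}{8}\alpha A - 1\rp = 0$ and which likewise yields $\alpha = \tfrac{8}{3A}$. The only cosmetic difference is the order of operations: you compute the soliton constant for general $\alpha$ and then set it to zero, whereas the paper sets $\kappa = 0$ first and then checks the derivation criterion.
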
			
				
				\begin{proof}
					Since we are focused on finding self-similar solutions to the $\mathrm{RG}$-$\mathrm{2}$ flow on the Heisenberg group, we will only be looking for steady algebraic $\mathrm{RG}$-2 solitons of the flow.
					The operator $\tensor{D} = \rgop\left[\met{g}\right] - \kappa \mathrm{Id}$ is diagonalized with respect to the Milnor frame.
					Setting $\kappa = 0$ and applying \lemref{heislemma}, we find that $\tensor{D}$ is a derivation of the Lie algebra $\mathfrak{h}$ if and only if 
					$\widehat{RG}_{1}^{1} =  \widehat{RG}_{2}^{2}  + \widehat{RG}_{3}^{3}$.
					The equation  $\widehat{RG}_{1}^{1} =  \widehat{RG}_{2}^{2}  + \widehat{RG}_{3}^{3}$ is satisfied if and only if
						$- \frac{\alpha}{8}A^2 - A = 2\lp A  - \frac{5}{8}\alpha A^{2}\rp$,
					or $3A\lp \frac{3}{8}\alpha A -  1\rp = 0$.
					Since $A = \met{g}\lp \bv{1}, \bv{1}\rp$, the result follows.
				\end{proof}
				
	\subsection{$\mathrm{E}(2)$}
	\label{euclideanmotions}
			\label{xcfeuclid}
		Let $\met{g}$ be a left invariant metric for $\mathrm{E}(2)$,  the three-dimensional group of (orientation preserving) isometries of the Euclidean plane, and let $\bv{1}, \bv{2}, \bv{3}$ be a Milnor frame for $\met{g}$ such that $\met{g} = A \meti{1} + B \meti{2} + C \meti{3}$, with the structure of the Lie algebra  determined by the non-zero brackets
				\begin{equation*}
				\left[ \bv{2}, \bv{3}\right] =\bv{1} \hspace{.25in} \textrm{and} \hspace{.25in}
				\left[ \bv{3}, \bv{1}\right] = \bv{2}.
				\end{equation*}
		Using an automorphism of the Lie algebra, we can further assume that $A = 1$ and write  $\met{g} =   \meti{1} + B \meti{2} + C \meti{3}$. \\
		According to \eqref{sectional}, the principal sectional curvatures are
			\begin{equation}
			\label{sectionale2}
			K_{1}  = \frac{\left(B + 3\right)\left(B - 1\right)}{4BC}, \quad\quad
			K_{2}  = \frac{\left(3B + 1\right)\left(1- B\right)}{4BC}, \quad\quad
			K_{3} =  \frac{\left(B - 1\right)^2}{4BC}.
			\end{equation}
			Note that when $B = 1$, we obtain family of  flat metrics on $\mathrm{E}(2)$.
		
		As before, we can make use of the algebraic structure of the Lie algebra to provide a simple criterion for when a left invariant metric is an algebraic $\tensor{T}$-soliton for a geometric evolution equation of the form $\frac{\partial \met{g}}{\partial t} = \tensor{T}\left[\met{g}\right]$.
		\begin{lemma}
		\label{E2lemma}
		Let $\met{g} =  \meti{1} + B\meti{2} + C\meti{3}$ be a left invariant metric (expressed relative to a Milnor frame) on $\mathrm{E}(2)$.
		If $\widehat{\tensor{T}}\left[\met{g}\right]$ is diagonalized relative to the Milnor frame, then $\met{g}$ is an algebraic $\tensor{T}$-soliton for the evolution equation 
		$\frac{\partial \met{g}}{\partial t} = \tensor{T}\left[\met{g}\right]$ if and only if $\widehat{T}^{1}_{1} = \widehat{T}^{2}_{2}$.
		Moreover, the soliton constant is $\kappa = \widehat{T}^{3}_{3}$.
		\end{lemma}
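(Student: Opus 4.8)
The plan is to mirror the proof of \lemref{heislemma}, replacing the single bracket relation of the Heisenberg algebra with the two nonzero brackets of $\mathrm{E}(2)$, namely $\left[\bv{2}, \bv{3}\right] = \bv{1}$ and $\left[\bv{3}, \bv{1}\right] = \bv{2}$, with $\left[\bv{1}, \bv{2}\right] = 0$. I would set $\tensor{D} = \widehat{\tensor{T}}\left[\met{g}\right] - \kappa\,\mathrm{Id}$. Since $\widehat{\tensor{T}}\left[\met{g}\right]$ is assumed diagonal in the Milnor frame, $\tensor{D}$ is diagonal as well, with $\bv{1}, \bv{2}, \bv{3}$ an ordered basis of eigenvectors and eigenvalues $d_{i} = \widehat{T}^{i}_{i} - \kappa$. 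This places the problem squarely in the setting of \lemref{evlem}.

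Next I would apply \lemref{evlem} bracket by bracket. The relation $\left[\bv{1}, \bv{2}\right] = 0$ imposes no condition. Since $\left[\bv{2}, \bv{3}\right] = \bv{1}$ is the eigenvector $\bv{1}$, we need $d_{1} = d_{2} + d_{3}$, and since $\left[\bv{3}, \bv{1}\right] = \bv{2}$ is the eigenvector $\bv{2}$, we need $d_{2} = d_{3} + d_{1}$. Rewriting the first equation in terms of the diagonal entries of $\widehat{\tensor{T}}\left[\met{g}\right]$ gives $\kappa = \widehat{T}^{2}_{2} + \widehat{T}^{3}_{3} - \widehat{T}^{1}_{1}$, while the second gives $\kappa = \widehat{T}^{3}_{3} + \widehat{T}^{1}_{1} - \widehat{T}^{2}_{2}$.

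Finally I would read off both conclusions from the compatibility of these two expressions. Equating the two values of $\kappa$ forces $\widehat{T}^{1}_{1} = \widehat{T}^{2}_{2}$, which gives necessity; conversely, when $\widehat{T}^{1}_{1} = \widehat{T}^{2}_{2}$ one has $d_{1} = d_{2}$, so both bracket equations reduce to the single requirement $d_{3} = 0$, i.e. $\kappa = \widehat{T}^{3}_{3}$, and \lemref{evlem} then guarantees that $\tensor{D}$ is a derivation. This simultaneously yields the stated soliton constant $\kappa = \widehat{T}^{3}_{3}$.

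I do not anticipate any genuine difficulty here; the one point requiring care is that $\mathrm{E}(2)$ carries two independent nonzero brackets rather than the single bracket of the Heisenberg algebra, so \lemref{evlem} produces two eigenvalue conditions instead of one. The substance of the lemma is precisely that these two conditions are simultaneously satisfiable exactly when the first two diagonal entries of $\widehat{\tensor{T}}\left[\met{g}\right]$ coincide, and I would be careful to argue necessity (by equating the two expressions for $\kappa$) and sufficiency (by checking that the conditions then collapse to $d_{3} = 0$) separately, rather than conflating the two bracket relations.
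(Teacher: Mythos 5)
Your proposal is correct and follows essentially the same route as the paper: both set $\tensor{D} = \widehat{\tensor{T}}\left[\met{g}\right] - \kappa\,\mathrm{Id}$ and apply \lemref{evlem} to the two nonzero brackets $\left[\bv{2},\bv{3}\right]=\bv{1}$ and $\left[\bv{3},\bv{1}\right]=\bv{2}$, obtaining the eigenvalue conditions $d^{1}_{1}=d^{2}_{2}+d^{3}_{3}$ and $d^{2}_{2}=d^{3}_{3}+d^{1}_{1}$, which together force $d^{3}_{3}=0$ (i.e.\ $\kappa = \widehat{T}^{3}_{3}$) and $\widehat{T}^{1}_{1}=\widehat{T}^{2}_{2}$. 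Your explicit separation of necessity and sufficiency is slightly more detailed than the paper's one-line deduction, but the substance is identical.
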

		
		\begin{proof}
		The proof proceeds in an identical manner to that of the proof of  \lemref{heislemma}.
		Let $\tensor{D}\left[\met{g}\right] = \widehat{\tensor{T}}\left[\met{g}\right] - \kappa \mathrm{Id}$ and assume that 
		$\widehat{\tensor{T}}\left[\met{g}\right] : \mathfrak{h} \to \mathfrak{h}$ is diagonalized relative to the Milnor frame.
		Owing to the Lie bracket structure of the Lie algebra $\mathfrak{h}$, it follows from \lemref{evlem} that the operator $\tensor{D} = \lp d^{i}_{j} \rp = \lp \widehat{T}^{i}_{j}- \kappa \delta^{i}_{j}\rp$ is a derivation of $\mathfrak{h}$ if and only if $\left[ \bv{2}, \bv{3} \right] = \bv{1}$ and $\left[\bv{3}, \bv{1}\right] = \bv{2}$ are eigenvectors for $\tensor{D}$ with eigenvalues $d^{1}_{1} = d^{2}_{2} + d^{3}_{3}$ and $d^{2}_{2} = d^{3}_{3} + d^{1}_{1}$.
		Accordingly, we must have  $d^{3}_{3} = \widehat{T}^{3}_{3} - \kappa = 0$ and $\widehat{T}^{1}_{1} =  \widehat{T}^{2}_{2}$, which completes the proof.
		\end{proof}
			\subsubsection{$\mathrm{XCF}$-solitons}
			It follows from \eqref{xcfgeneric} and  \eqref{sectionale2} that the cross curvature tensor  is 				
				\begin{equation}
				\label{xcfE2}
					\ccf\left[\met{g}\right] = 
					- \frac{\left(3B + 1\right)\left(B - 1\right)^3}{16 B^{2} C^{2}} \meti{1} +  
					\frac{\left(B + 3\right)\left(B - 1\right)^3}{16 B C^{2}}\meti{2} -
					\frac{\lp B + 3\rp \lp B - 1\rp^2\lp 3B + 1\rp}{16 B^2 C} \meti{3}.
				\end{equation}
			The non-zero components of $\ccfop\left[\met{g}\right]$ with respect to the indicated frame are $ \widehat{H}^{1}_{1} = H_{11}$, $\widehat{H}^{2}_{2} = \frac{H_{22}}{B}$, and $\widehat{H}^{3}_{3} = \frac{H_{33}}{C}$,  and the cross curvature operator is
				\begin{equation*}
				 \ccfop\left[\met{g}\right] = \diag \lp - \frac{\left(3B + 1\right)\left(B - 1\right)^3}{16 B^{2} C^{2}},  
				 \frac{\left(B + 3\right)\left(B - 1\right)^3}{16B^2 C^{2}}, 
				 \frac{\lp B + 3\rp \lp B - 1\rp^2\lp 3B + 1\rp}{16 B^2 C^{2}} \rp.
				\end{equation*}
			\begin{theorem}
			All flat metrics on $\mathrm{E}(2)$ are fixed points for the $\mathrm{XCF}$ on $\mathrm{E}(2)$ (and are thus (trivial) steady algebraic solitons).
			These are the only algebraic solitons for the $\mathrm{XCF}$ on $\mathrm{E}(2)$.
			\end{theorem}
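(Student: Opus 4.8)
The plan is to invoke \lemref{E2lemma}, which reduces the question of whether a left invariant metric $\met{g} = \meti{1} + B\meti{2} + C\meti{3}$ on $\mathrm{E}(2)$ is an algebraic $\ccf$-soliton to the single scalar equation $\widehat{H}^{1}_{1} = \widehat{H}^{2}_{2}$, with the soliton constant then being $\kappa = \widehat{H}^{3}_{3}$. Since $\ccf$ is natural and homogeneous of degree $-1$, \propref{solitonselfsimilar} and \propref{algimpliessol} guarantee that every such algebraic soliton yields a self-similar solution of the $\mathrm{XCF}$, so classifying the algebraic solitons is the entire task.

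First I would dispose of the flat family. Setting $B = 1$ in \eqref{sectionale2} gives $K_{1} = K_{2} = K_{3} = 0$, so by \eqref{xcfgeneric} every component of $\ccf\left[\met{g}\right]$ vanishes; hence $\frac{\partial \met{g}}{\partial t} = \pm 2\ccf\left[\met{g}\right] = 0$ and the metric is a genuine fixed point. Such a fixed point is trivially a steady algebraic soliton: taking $\kappa = 0$ makes $\tensor{D} = \ccfop\left[\met{g}\right] = 0$, which is vacuously a derivation of $\mathfrak{h}$.

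It remains to check that no other metric qualifies. Using $\widehat{H}^{1}_{1} = H_{11}$ and $\widehat{H}^{2}_{2} = H_{22}/B$ from the components recorded above, the soliton criterion $\widehat{H}^{1}_{1} = \widehat{H}^{2}_{2}$ becomes, after clearing the common factor $16B^{2}C^{2}$,
\[
-(3B+1)(B-1)^{3} = (B+3)(B-1)^{3},
\]
equivalently $(B-1)^{3}\big[(3B+1)+(B+3)\big] = 0$, which factors as $4(B+1)(B-1)^{3} = 0$. Because $B = \met{g}\left(\bv{2},\bv{2}\right) > 0$ forces $B+1 \neq 0$, the only solution is $B = 1$, i.e.\ precisely the flat family. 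Thus every algebraic $\mathrm{XCF}$-soliton on $\mathrm{E}(2)$ is flat.

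The argument is essentially a one-line reduction once \lemref{E2lemma} and the explicit form of $\ccfop\left[\met{g}\right]$ are in hand, so I do not expect a genuine obstacle. The only point demanding attention is the factorization: one must notice that the bracketed sum $(3B+1)+(B+3) = 4(B+1)$ cannot vanish for positive $B$, so the entire content of the soliton condition collapses onto the factor $(B-1)^{3}$, pinning $B$ to $1$. The sole bookkeeping risk is keeping the signs in the cross curvature components from \eqref{xcfE2} straight; with those taken as given, the classification is immediate.
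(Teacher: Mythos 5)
Your proposal is correct and follows essentially the same route as the paper: both reduce the classification to the single condition $\widehat{H}^{1}_{1} = \widehat{H}^{2}_{2}$ via \lemref{E2lemma} and then solve for $B$ using the explicit curvature data, concluding $B = 1$ since the spurious root is negative. The only cosmetic difference is that the paper factors the condition as $K_{2}K_{3} = K_{1}K_{3}$ and splits into the cases $K_{3} = 0$ or $K_{1} = K_{2}$, whereas you work with the polynomial components of $\ccfop\left[\met{g}\right]$ directly and obtain $4(B+1)(B-1)^{3} = 0$ in one step (incidentally, your computation also shows the paper's stated spurious root $B = -\tfrac{1}{2}$ should be $B = -1$, though this does not affect the conclusion).
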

				\begin{proof}
				That the flat metrics on $\mathrm{E}(2)$ are fixed points for the $\mathrm{XCF}$ is clear from the definition of the cross curvature tensor.
				To see that these are the only algebraic solitons for the $XCF$ on $\mathrm{E}(2)$, we will employ \lemref{E2lemma}. 
				By \lemref{E2lemma},  $\widehat{H}^{1}_{1} = \widehat{H}^{2}_{2}$ and $\kappa = \widehat{H}^{3}_{3}$.
				Owing to \eqref{operatorgeneric}, we see that $\widehat{H}^{1}_{1} = \widehat{H}^{2}_{2}$ if and only if $K_{2}K_{3} = K_{1}K_{3}$.
				Thus we must have $K_{3} = 0$ or $K_1 = K_2$.
				From \eqref{sectionale2}, we see that $K_{3} = 0 $ if and only if $ B = 1$ and that $K_{1} = K_{2}$ if and only if $B = 1$ or $B = -\frac{1}{2}$.
				Since $B = \met{g}\lp \bv{2}, \bv{2}\rp$, we conclude that $\met{g}$ is an algebraic soliton if and only if $B = 1$.
				It follows that such a metric is flat, and is thus a fixed point of the flow.
				\end{proof}

			\subsubsection{$\mathrm{RG}$-$\mathrm{2}$ solitons}
			\label{rg2euclid}
			The RG-2 tensor of $\met{g}$ is  $\rg\left[\met{g}\right] = RG_{11}\meti{1} + RG_{22} \meti{2} + RG_{33}\meti{3}$, where	
				\begin{align}
				\label{rg2compE2}
					\medskip
					RG_{11} 	&= - \frac{ \lp B  -  1 \rp \lp 5 \alpha B^{3} -3\alpha B^2 - 8 B^2 C - B\alpha  - 8 BC - \alpha \rp}{8  B^2C^2} \notag \\
					\medskip
					RG_{22}	&= - \frac{ \lp B - 1\rp  \lp  \alpha B^{3} + \alpha B^2 + 8B^2 C + 3 B \alpha + 8 BC - 5\alpha \rp}{8  BC^2} \notag	\\			
					RG_{33}	&= - \frac{ \lp B - 1 \rp^2 \lp 5 \alpha B^2  + 6 \alpha B -  8 B C + 5 \alpha \rp}{8 B^2 C},
				\end{align}
			and the corresponding $\mathrm{RG}$-$\mathrm{2}$ operator is $\rgop = \diag \lp \widehat{RG}^{1}_{1}, \widehat{RG}^{2}_{2}, \widehat{RG}^{3}_{3}\rp $, where
				\begin{equation}
				\label{rg2opE2}
				\widehat{RG}^{1}_{1} = RG_{11} \quad\quad \widehat{RG}^{2}_{2} = \frac{RG_{22}}{B}, \quad\quad \widehat{RG}^{3}_{3} = \frac{RG_{33}}{C}.
				\end{equation} 
			\begin{theorem}
			All flat metrics on $\mathrm{E}(2)$ are fixed points for the $\mathrm{RG}$-$\mathrm{2}$ (and are thus trivial steady algebraic solitons).  Furthermore, the flat metrics on $\mathrm{E}(2)$ are the only left invariant steady algebraic solitons for the $\mathrm{RG}$-$\mathrm{2}$ flow on $\mathrm{E}(2)$.
			\end{theorem}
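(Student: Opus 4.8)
The plan is to prove the two assertions separately, using the criterion of \lemref{E2lemma} as the organizing tool. For the first assertion, I would read off from \eqref{sectionale2} that each of $K_{1}, K_{2}, K_{3}$ carries a factor of $(B-1)$, so all three principal sectional curvatures vanish exactly when $B = 1$; these are precisely the flat metrics noted in the preceding discussion. Flatness gives $\ric\left[\met{g}\right] = 0$ and $\tensor{Rm}^{2}\left[\met{g}\right] = 0$, whence $\rg\left[\met{g}\right] = -2\ric\left[\met{g}\right] - \frac{\alpha}{2}\tensor{Rm}^{2}\left[\met{g}\right] = 0$. Thus $\met{g}$ is a fixed point of the flow, and choosing $\kappa = 0$ makes $\tensor{D} = \rgop\left[\met{g}\right] - \kappa\,\mathrm{Id} = 0$ a (trivial) derivation, so $\met{g}$ is a trivial steady algebraic soliton.

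For the converse I would argue by contradiction. Suppose $\met{g} = \meti{1} + B\meti{2} + C\meti{3}$ is a steady algebraic $\mathrm{RG}$-$\mathrm{2}$-soliton with $B \ne 1$. By \lemref{E2lemma} being an algebraic soliton forces $\widehat{RG}^{1}_{1} = \widehat{RG}^{2}_{2}$ with soliton constant $\kappa = \widehat{RG}^{3}_{3}$, and being steady forces $\kappa = 0$, i.e.\ $\widehat{RG}^{3}_{3} = 0$. Starting from \eqref{rg2compE2} and \eqref{rg2opE2}, I would cancel the nonzero scalar factor $(B-1)$ (resp.\ $(B-1)^{2}$) and the positive quantities $B, C$, reducing the two conditions to polynomial identities. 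A short factorization should show that $\widehat{RG}^{1}_{1} = \widehat{RG}^{2}_{2}$ is equivalent to $(B+1)\left[\alpha(B-1)^{2} - 4BC\right] = 0$, hence, since $B > 0$, to $\alpha(B-1)^{2} = 4BC$; similarly $\widehat{RG}^{3}_{3} = 0$ reduces to $\alpha\left(5B^{2} + 6B + 5\right) = 8BC$.

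The step I expect to carry the argument is showing these two relations are simultaneously impossible for admissible $B$. Substituting $8BC = 2\alpha(B-1)^{2}$ from the first relation into the second and cancelling $\alpha > 0$ yields $5B^{2} + 6B + 5 = 2(B-1)^{2}$, that is $3B^{2} + 10B + 3 = (3B+1)(B+3) = 0$. Since $B = \met{g}\left(\bv{2}, \bv{2}\right) > 0$, neither root $B = -\tfrac{1}{3}$ nor $B = -3$ is admissible, the desired contradiction. Hence no non-flat steady algebraic $\mathrm{RG}$-$\mathrm{2}$-soliton exists, and together with the first part this shows the flat metrics ($B = 1$) are exactly the steady algebraic solitons on $\mathrm{E}(2)$. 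The only real obstacle is the bookkeeping in the two factorizations; once the factors $(B-1)$ and $(B+1)$ are extracted the contradiction is immediate.
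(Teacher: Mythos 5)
Your proposal is correct and follows essentially the same route as the paper: both rest on \lemref{E2lemma}, reduce the steady-soliton conditions $\widehat{RG}^{1}_{1} = \widehat{RG}^{2}_{2}$ and $\widehat{RG}^{3}_{3} = 0$ to the relations $\alpha(B-1)^{2} = 4BC$ and $\alpha(5B^{2}+6B+5) = 8BC$, and eliminate $C$ to arrive at the same impossible factorization $(3B+1)(B+3) = 0$ for $B > 0$. The only cosmetic difference is bookkeeping order (the paper solves for $C$ and substitutes into $\widehat{RG}^{3}_{3}$, while you eliminate $8BC$ between two polynomial identities), and your cancellation of $\alpha$ is legitimate since the first relation forces $\alpha(B-1)^{2} = 4BC > 0$, hence $\alpha \ne 0$.
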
			
			
			\begin{proof}
			Owing to the definition of the $\mathrm{RG}$-$\mathrm{2}$ tensor, it is clear that the flat metrics are fixed points (and thus trivial algebraic solitons for the $\mathrm{RG}$-$\mathrm{2}$ flow).
			To see that the flat metrics are the only left invariant steady algebraic solitons for the $\mathrm{RG}$-$\mathrm{2}$ flow on $\mathrm{E}(2)$,  note by \lemref{E2lemma} that in order for $\rgop\left[\met{g}\right]$ to be a steady algebraic we must have $\widehat{RG}_{33} = 0$ and  $\widehat{RG}_{11} = \widehat{RG}_{22}$.
			
			Combining \eqref{rg2generic} and \eqref{operatorgeneric}, we see that $\widehat{RG}_{11} = \widehat{RG}_{22}$ if and only if 
				\[
				K_{2}\lp2 + \alpha K_{2} \rp = K_{1}\lp 2 + \alpha K_{1}\rp.
				\]
			Substituting \eqref{sectionale2} into the above and making using of the fact that we can assume that $B \ne 1$, this is equivalent to the equation
				\[
				-\lp 3B + 1 \rp \lp 2+ \alpha \frac{\left(3B + 1\right)\left(1- B\right)}{4BC}\rp = \lp B + 3 \rp \lp 2 + \alpha \frac{\left(B + 3\right)\left(B - 1\right)}{4BC}\rp.
				\]
			The above equation admits solutions when $B = -1$ and when $C = \frac{\alpha \lp B - 1\rp^{2}}{4B}$. 
			Since $B $ must be greater than zero, we only concern ourselves with the later.
			Substituting $C =  \frac{\alpha \lp B - 1\rp^{2}}{4B}$ into $\widehat{RG}_{33}$ as in \eqref{rg2opE2}, we find that 
			$\widehat{RG}_{33} = 0$ if and only if $-\frac{1}{2} \frac{\lp 3 B  + 1 \rp \lp B + 3 \rp}{B} = 0$.
			Again, since $B$ must be greater than zero, we conclude that the only left invariant steady algebraic $\mathrm{RG}$-$\mathrm{2}$ solitons on $\mathrm{E}(2)$ are the flat metrics.
			
			\end{proof}

	\subsection{$\mathrm{E}(1,1)$}
	\label{LMsolvable}
		Let $\met{g}$ be a left invariant metric for $\mathrm{E}(1,1)$,  the solvable three-dimensional group of isometries of the standard Lorentz-Minkowski plane, expressed relative to a Milnor frame $\bv{1}, \bv{2}, \bv{3}$ as $\met{g} = A \meti{1} + B \meti{2} + C \meti{3}$,
		with the corresponding Lie algebra structure generated by the non-zero brackets
			\[
				\left[ \bv{2}, \bv{3} \right] = \bv{1}, \hspace{.5in} \left[ \bv{1}, \bv{2}\right] = - \bv{3}.
			\]
		Using an automorphism of the Lie algebra, we can further require that $C = 1$ and that the metric takes the form $\met{g} = A \meti{1} + B \meti{2} +  \meti{3}.$
		
		Making the appropriate substitutions into \eqref{sectional}, we find that the principal sectional curvatures of $\met{g}$ are 
			\begin{align}
			\label{sectionalsol}
			K_{1} &= K\left(\bv{2} \wedge \bv{3}\right) =- \frac{\lp A + 1\rp \lp 3A - 1 \rp}{4A B} \nonumber \\
			K_{2} & = K\left(\bv{1} \wedge \bv{3}\right) = \frac{\lp A + 1 \rp^2}{4 A B} \\
			K_{3} &= K\left(\bv{1} \wedge \bv{2}\right) = \frac{\lp A + 1\rp \lp A - 3\rp}{4 A B}.\nonumber
			\end{align}
		We now state a lemma for $\mathrm{E}(1,1)$ that is equivalent to  \lemref{E2lemma} for $\mathrm{E}(2)$.
			\begin{lemma}
			\label{Sollemma}
			Let $\met{g} =  A\meti{1} + B\meti{2} + \meti{3}$ be a left invariant metric (expressed relative to a Milnor frame) on $\mathrm{E}(1,1)$.
		If $\widehat{\tensor{T}}\left[\met{g}\right]$ is diagonalized relative the Milnor frame, then $\met{g}$ is an algebraic $\tensor{T}$-soliton for the evolution equation 
		$\frac{\partial \met{g}}{\partial t} = \tensor{T}\left[\met{g}\right]$ if and only if $\widehat{T}^{1}_{1} = \widehat{T}^{3}_{3}$.
		Moreover, the soliton constant is $\kappa = \widehat{T}^{2}_{2}$.
			\end{lemma}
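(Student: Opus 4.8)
The plan is to mirror the argument used for \lemref{E2lemma}, adapting it to the bracket structure of $\mathrm{E}(1,1)$. Since $\widehat{\tensor{T}}[\met{g}]$ is assumed diagonal in the Milnor frame, the operator $\tensor{D} = \widehat{\tensor{T}}[\met{g}] - \kappa\mathrm{Id}$ is automatically diagonal with eigenvectors $\bv{1}, \bv{2}, \bv{3}$ and eigenvalues $d^i_i = \widehat{T}^i_i - \kappa$. This places us squarely in the hypotheses of \lemref{evlem}, which reduces the requirement that $\tensor{D}$ be a derivation to a pair of eigenvalue constraints coming from the two nonzero brackets.

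First I would invoke \lemref{evlem} to translate the derivation condition into eigenvalue conditions on those brackets. The bracket $\left[\bv{2}, \bv{3}\right] = \bv{1}$ forces $\bv{1}$ to be an eigenvector with eigenvalue $d^2_2 + d^3_3$, hence $d^1_1 = d^2_2 + d^3_3$; the bracket $\left[\bv{1}, \bv{2}\right] = -\bv{3}$ forces $-\bv{3}$ (equivalently $\bv{3}$) to be an eigenvector with eigenvalue $d^1_1 + d^2_2$, hence $d^3_3 = d^1_1 + d^2_2$. The minus sign in the second bracket is harmless, since rescaling an eigenvector does not change its eigenvalue.

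Next I would solve this linear system. Adding the two relations gives $d^1_1 + d^3_3 = d^1_1 + 2d^2_2 + d^3_3$, so $d^2_2 = 0$, i.e. $\widehat{T}^2_2 = \kappa$; feeding this back into either relation yields $d^1_1 = d^3_3$, i.e. $\widehat{T}^1_1 = \widehat{T}^3_3$. Conversely, if $\widehat{T}^1_1 = \widehat{T}^3_3$, then setting $\kappa = \widehat{T}^2_2$ makes $d^2_2 = 0$ and $d^1_1 = d^3_3$, which one checks satisfies both bracket constraints, so that $\tensor{D}$ is a derivation by \lemref{evlem}. This establishes the stated equivalence and identifies the soliton constant as $\kappa = \widehat{T}^2_2$.

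The computation is entirely routine, and I expect no genuine obstacle. The only point requiring a moment's care is keeping the two eigenvalue relations distinct from the $\mathrm{E}(2)$ case of \lemref{E2lemma} --- the roles of the indices shift because the second nonzero bracket of $\mathrm{E}(1,1)$ is $\left[\bv{1}, \bv{2}\right] = -\bv{3}$ rather than $\left[\bv{3}, \bv{1}\right] = \bv{2}$, which is precisely why the constrained index here is $2$ (yielding $\kappa = \widehat{T}^2_2$ together with the symmetric condition $\widehat{T}^1_1 = \widehat{T}^3_3$) instead of $3$.
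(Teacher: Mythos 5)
Your proof is correct and is essentially the paper's own argument: the paper simply states that the proof is identical to that of \lemref{E2lemma}, i.e., applying \lemref{evlem} to the two nonzero brackets and solving the resulting linear system for the eigenvalues, which is exactly what you carried out (including the correct observation that the sign in $\left[\bv{1},\bv{2}\right]=-\bv{3}$ is immaterial).
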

			\begin{proof}
			The proof proceeds in an identical fashion to the proof of \lemref{E2lemma} and the details are left to the reader.
			\end{proof}
		\subsubsection{XCF solitons}
		Making the appropriate substitutions into \eqref{xcfgeneric}, the components of the cross curvature tensor  $\ccf\left[ \met{g}\right] = H_{11}\tp{1}{1} + H_{22}\tp{2}{2} + H_{33} \tp{3}{3}$ are found to be
			\begin{equation}
			\label{xcfopsol}
				H_{11} = \frac{\lp A + 1\rp^3 \lp A - 3\rp}{16 A B^{2}} \quad
				H_{22} =  \frac{ \lp A + 1\rp^2 \lp1 - 3 A\rp \lp A - 3\rp}{16 A^2 B} \quad
				H_{33} =\frac{\lp A + 1 \rp ^3 \lp 1 - 3A\rp}{16A^{2} B^{2}},
			\end{equation}
		and the cross curvature operator $\ccfop$ is represented in the given frame by
		$\ccfop\left[\met{g}\right] = \diag\left(\widehat{H}^{1}_{1}, \widehat{H}^{2}_{2}, \widehat{H}^{3}_{3}\right),$
		where $\widehat{H}^{1}_{1} = \frac{H_{11}}{A}, \widehat{H}^{2}_{2} = \frac{H_{22}}{B},$ and $\widehat{H}^{3}_{3} = H_{33}$.
			\begin{theorem}
			The left invariant metric $\met{g} = A\meti{1} + B \meti{2} + \meti{3}$ is an expanding  (res. shrinking) algebraic soliton for the $\mathrm{+XCF}$  (res. $\mathrm{-XCF}$) if and only if $A = 1$ and the  soliton constant is $\kappa = \frac{1}{B^2}$ (res. $\kappa = - \frac{1}{B^2}$).
			\end{theorem}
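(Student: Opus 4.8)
The plan is to reduce the algebraic soliton condition to a single scalar equation via \lemref{Sollemma} and then solve it directly from the explicit cross curvature data. The preceding computation shows that $\ccfop\left[\met{g}\right]$ is diagonalized with respect to the Milnor frame, so \lemref{Sollemma} applies: taking $\tensor{T} = \ccf$ (as stipulated for the $\mathrm{+XCF}$), the metric $\met{g}$ is a $\mathrm{+XCF}$-algebraic soliton if and only if $\widehat{H}^{1}_{1} = \widehat{H}^{3}_{3}$, and in that event the soliton constant is $\kappa = \widehat{H}^{2}_{2}$. This immediately converts the problem into a one-variable algebra problem in $A$.

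First I would substitute $\widehat{H}^{1}_{1} = \frac{H_{11}}{A}$ and $\widehat{H}^{3}_{3} = H_{33}$ from \eqref{xcfopsol} to obtain
\[
\widehat{H}^{1}_{1} = \frac{\lp A + 1\rp^3 \lp A - 3\rp}{16 A^2 B^{2}}, \qquad \widehat{H}^{3}_{3} = \frac{\lp A + 1 \rp ^3 \lp 1 - 3A\rp}{16 A^{2} B^{2}}.
\]
The equation $\widehat{H}^{1}_{1} = \widehat{H}^{3}_{3}$ carries the common factor $\frac{\lp A+1\rp^3}{16 A^2 B^2}$, which is nonzero since $A = \met{g}\lp \bv{1}, \bv{1}\rp > 0$ forces $A + 1 > 0$. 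Cancelling it reduces the equation to $A - 3 = 1 - 3A$, whose unique solution is $A = 1$. This shows $A = 1$ is both necessary and sufficient for $\met{g}$ to be a $\mathrm{+XCF}$-algebraic soliton.

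Next I would evaluate the soliton constant. From $\kappa = \widehat{H}^{2}_{2} = \frac{H_{22}}{B}$ and \eqref{xcfopsol}, setting $A = 1$ yields $\kappa = \frac{1}{B^2}$. Since $B > 0$, this is positive, so by the convention following \propref{algimpliessol} the soliton is \emph{expanding} for the $\mathrm{+XCF}$. Finally, invoking the earlier remark that $\lp \mathcal{M}, \met{g}, \vf{X}, \beta\rp$ is a $\mathrm{+XCF}$-soliton structure if and only if $\lp \mathcal{M}, \met{g}, -\vf{X}, -\beta\rp$ is a $\mathrm{-XCF}$-soliton structure, the same metric is a \emph{shrinking} algebraic soliton for the $\mathrm{-XCF}$ with soliton constant $-\frac{1}{B^2}$, completing the stated equivalence.

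No genuine obstacle arises here: once \lemref{Sollemma} collapses the derivation condition to $\widehat{H}^{1}_{1} = \widehat{H}^{3}_{3}$, everything is explicit algebra. The only point demanding care is the cancellation of $\lp A + 1\rp^3$, where one must explicitly cite the positivity $A > 0$ of the metric coefficient to discard the spurious root $A = -1$ and thereby guarantee that $A = 1$ is the \emph{only} solution.
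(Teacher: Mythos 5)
Your proposal is correct and follows essentially the same route as the paper: both invoke \lemref{Sollemma} to reduce the soliton condition to $\widehat{H}^{1}_{1} = \widehat{H}^{3}_{3}$ with $\kappa = \widehat{H}^{2}_{2}$, and both discard the spurious root $A = -1$ using $A > 0$ to conclude $A = 1$ and $\kappa = \frac{1}{B^2}$. The only cosmetic difference is that the paper factors the equation through the sectional curvatures (cases $K_{2} = 0$ or $K_{1} = K_{3}$) while you cancel the common factor $\lp A+1\rp^{3}$ directly, and you make the $\mathrm{-XCF}$ half of the statement explicit via the sign-flip remark, which the paper leaves implicit.
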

			
			\begin{proof}
				Since $\mathbf{D} = \ccfop\left[\met{g}\right] - \kappa \mathrm{Id}$ is diagonalized relative to the Milnor frame, it follows from \lemref{Sollemma} that $\mathbf{D} = \lp d^{i}_{j}\rp = \lp \widehat{H}^{i}_{j} - \kappa \delta^{i}_{j}\rp$ is a derivation of $\mathfrak{h}$ if and only if $\widehat{H}^{1}_{1} = \widehat{H}^{3}_{3}$ and $\kappa = \widehat{H}^{3}_{3}$.
					Owing to \eqref{operatorgeneric} and \eqref{xcfopsol}, it follows that we must have
					\[
					\kappa = \widehat{H}^{2}_{2} =   \frac{ \lp A + 1\rp^2 \lp1 - 3 A\rp \lp A - 3\rp}{16 A^2 B^2}, 
					\]
				and \[
					\widehat{H}^{1}_{1} = \widehat{H}^{3}_{3} \iff K_{2}K_{3} = K_{1}K_{2}.
					\]
				Thus, we must have $K_2 = 0$ (which only happens if $A = -1$) or $K_{1} = K_{3}$.
				Since $A > 0$, then according to \eqref{sectionalsol}, we see that $K_{1} = K_{3}$ if and only if $1 - 3A = A - 3$.
				We thus find that we have an algebraic soliton which gives rise to a self-similar solution when $A = 1$.
				Substituting $A = 1$ into $\kappa = \widehat{H}^{2}_{2}$ we find $\kappa =  \frac{1}{B^2}$.
			\end{proof}
		
		\subsubsection{$\mathrm{RG}$-$\mathrm{2}$ Solitons}
		\label{rg2minkowski}
			From \eqref{rg2generic} and \eqref{operatorgeneric}, respectively, the RG-2 tensor of $\met{g}$ is  $\rg\left[\met{g}\right] = RG_{11}\meti{1} + RG_{22} \meti{2} + RG_{33}\meti{3}$, where				
				\begin{align}
				\label{rg2compmink}
					RG_{11} 	&= - \frac{ \lp A + 1 \rp \lp \alpha A^{3} + 8A^2 B - \alpha A^2 + 3 \alpha A - 8 AB + 5 A\rp}{8 A B^2} \notag \\
					RG_{22}	&= - \frac{ \lp A + 1\rp ^2 \lp 5 \alpha A^2 - 6 \alpha A - 8 A B + 5 \alpha\rp}{8 A^2 B} \notag	\\				
					RG_{33}	&= - \frac{ \lp A + 1 \rp \lp 5 \alpha A^3 + 3 \alpha A^2 - 8 A^2 B - \alpha A + 8 A B + \alpha\rp}{8 A^2 B^2},
				\end{align}
			and the corresponding $\mathrm{RG}$-$\mathrm{2}$ operator is $\rgop = \diag \lp \widehat{RG}^{1}_{1}, \widehat{RG}^{2}_{2}, \widehat{RG}^{3}_{3}\rp $, where
				\begin{equation}
				\label{rg2opmink}
				\widehat{RG}^{1}_{1} = \frac{RG_{11}}{A}, \quad\quad \widehat{RG}^{2}_{2} = \frac{RG_{22}}{B}, \quad\quad \widehat{RG}^{3}_{3} = RG_{33}.
				\end{equation} 
			\begin{theorem}
			\label{rg2sol}
			The left invariant metric $\met{g} = A \meti{1} + B \meti{2} + \meti{3}$ is a steady algebraic soliton that gives rise to a self-similar solution of the $\mathrm{RG}$-$\mathrm{2}$ flow on $\mathrm{E}(1,1)$
			if and only if 
				\begin{enumerate}
					\item $ A = 1$ and $\alpha = 2B $, or 
					\item $A = \frac{1}{3}$ or $A = 3$ and $\alpha = \frac{3}{4}B$.
				\end{enumerate}
			\end{theorem}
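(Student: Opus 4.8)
The plan is to impose the two conditions that characterize a steady algebraic $\mathrm{RG}$-$2$ soliton and solve them in terms of the sectional curvatures $K_1$ and $K_3$. Since $\rg$ is natural but not homogeneous, only \emph{steady} algebraic solitons give rise to self-similar solutions, so I seek an algebraic $\mathrm{RG}$-$2$ soliton whose soliton constant vanishes. By \lemref{Sollemma}, the diagonal operator $\rgop\left[\met{g}\right]$ determines a derivation precisely when $\widehat{RG}^{1}_{1} = \widehat{RG}^{3}_{3}$, with soliton constant $\kappa = \widehat{RG}^{2}_{2}$; steadiness forces $\kappa = 0$, i.e.\ $\widehat{RG}^{2}_{2} = 0$. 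Rather than grapple with the rational expressions \eqref{rg2compmink}--\eqref{rg2opmink} directly, I would substitute the sectional-curvature form \eqref{rg2generic} together with \eqref{operatorgeneric}, which keeps the algebra manageable.

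In that form, after the common $K_2$-terms cancel, the derivation condition becomes
\[
(K_1 - K_3)\bigl(2 + \alpha(K_1 + K_3)\bigr) = 0,
\]
while the steady condition $\widehat{RG}^{2}_{2} = 0$ reads
\[
2(K_1 + K_3) + \alpha\bigl((K_1)^2 + (K_3)^2\bigr) = 0.
\]
The crucial structural feature is that $K_2$ drops out of both equations, leaving a system in $K_1$ and $K_3$ alone.

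I would then split on the two factors of the derivation condition. In the first branch $K_1 = K_3$; by \eqref{sectionalsol} and $A > 0$ this holds if and only if $A = 1$, whence $K_1 = K_3 = -\frac{1}{B}$ and the steady equation collapses to $\alpha = 2B$, yielding case (1). In the second branch $2 + \alpha(K_1 + K_3) = 0$ with $A \ne 1$; substituting $K_1 + K_3 = -\frac{2}{\alpha}$ into the steady equation, the linear terms cancel and one is left with $\alpha K_1 K_3 = 0$, hence $K_1 K_3 = 0$ since $\alpha > 0$. From \eqref{sectionalsol} one has $K_1 = 0 \iff A = \frac{1}{3}$ and $K_3 = 0 \iff A = 3$ (the root $A = -1$ being excluded by positivity), and in either case solving $2 + \alpha(K_1 + K_3) = 0$ gives $\alpha = \frac{3}{4}B$, yielding case (2).

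I expect the genuine difficulty to be organizational rather than computational: the payoff of the whole argument is the reformulation that makes $K_2$ disappear and produces the clean identity $\alpha K_1 K_3 = 0$, after which everything is forced. The points requiring care are confirming that factoring introduces no spurious solutions and loses none, that the positivity constraints $A, B > 0$ discard the extraneous root $A = -1$ while keeping $\alpha > 0$ in both branches, and that the two branches are genuinely disjoint ($A = 1$ versus $A \in \{1/3, 3\}$).
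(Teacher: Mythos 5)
Your proposal is correct and arrives at exactly the classification in the theorem, but it organizes the computation differently from the paper. Both arguments rest on \lemref{Sollemma} together with the observation that, since $\rg$ is natural but not homogeneous, only steady solitons are relevant, so one must solve $\widehat{RG}^{1}_{1} = \widehat{RG}^{3}_{3}$ and $\widehat{RG}^{2}_{2} = 0$; and both split on a two-factor derivation condition. The paper, however, works throughout with the explicit rational expressions \eqref{rg2compmink}--\eqref{rg2opmink} in $A$, $B$, $\alpha$: it factors the condition $\widehat{RG}^{1}_{1} = \widehat{RG}^{3}_{3}$ into $\lp A+1 \rp \lp A - 1\rp \lp \alpha A^{2} + 2\alpha A - 4AB + \alpha \rp = 0$, and in the second branch solves for $\alpha = \frac{4AB}{\lp A+1\rp^{2}}$ and substitutes into $\widehat{RG}^{2}_{2} = 0$ to obtain $\lp 3A-1\rp\lp A-3\rp = 0$. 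Your route stays at the level of sectional curvatures: the cancellation of $K_{2}$ from both conditions, the factorization $\lp K_{1}-K_{3}\rp\lp 2 + \alpha\lp K_{1}+K_{3}\rp\rp = 0$, and the identity $K_{1}^{2}+K_{3}^{2} = \lp K_{1}+K_{3}\rp^{2} - 2K_{1}K_{3}$, which collapses the steady equation in the second branch to $\alpha K_{1}K_{3} = 0$. The two factorizations agree up to positive multiples (from \eqref{sectionalsol} one checks $K_{1}-K_{3} = -\frac{\lp A+1\rp\lp A-1\rp}{AB}$ and $K_{1}+K_{3} = -\frac{\lp A+1\rp^{2}}{2AB}$), so the branchings coincide and all positivity bookkeeping ($A>0$ discarding $A=-1$) is identical; what your version buys is conceptual transparency, since $K_{1}K_{3}=0$ makes visible why the solutions $A = \tfrac{1}{3}, 3$ are precisely the metrics with a vanishing principal curvature, whereas the paper's substitution produces $\lp 3A-1\rp\lp A-3\rp=0$ without that interpretation. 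One small remark: in your second branch, $K_{1}K_{3}=0$ needs only $\alpha \ne 0$, which is already forced by $2+\alpha\lp K_{1}+K_{3}\rp=0$ itself; invoking $\alpha > 0$ is unnecessary (and is prudent to avoid, given that on $\mathrm{SU}(2)$ the paper's own solitons occur at negative $\alpha$).
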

			
			\begin{proof}
			Owing to \lemref{Sollemma}, we see that $ \tensor{D} = \widehat{\rg}\left[\met{g}\right]$ is a derivation of the Lie algebra $\mathfrak{h}$ if and only if 
			$\widehat{RG}^{2}_{2} = 0$ and $\widehat{RG}^{1}_{1} = \widehat{RG}^{3}_{3}$. 	
			
			From \eqref{rg2compmink} and  \eqref{rg2opmink}, we see that  $\widehat{RG}^{1}_{1}  =  \widehat{RG}^{3}_{3} $ if and only if 
				\[
				\frac{1}{2}\frac{\lp A + 1 \rp \lp A - 1 \rp \lp \alpha A^2 + 2 \alpha A - 4 AB + \alpha\rp}{A^2 B^2} = 0.
				\]
				
			If $A = 1$,  then  $\widehat{\rg}_{22} = 0$ if and only if $- \frac{1}{2}\frac{4 \alpha - 8 B}{B^2} = 0$,which establishes \emph{1.}  
			If  $ \alpha A^2 + 2 \alpha A - 4 AB + \alpha = 0$, then solving for $\alpha = \frac{4AB}{\lp A + 1\rp^2}$ and substituting into $\widehat{RG}^{2}_{2}$, we find that 
			$\widehat{RG}^{2}_{2} = 0$ if and only if 
				\[
				-\frac{1}{2} \frac{\lp 3A - 1\rp \lp A - 3\rp}{AB} = 0,
				\]
			which establishes \emph{2.}
			\end{proof}
			
			\begin{remark}
			The left invariant metrics $\met{g} = 3 \meti{1} + B \meti{2} +  \meti{3}$ and $\met{g} = \frac{1}{3} \meti{1} + B \meti{2} + \meti{3}$ are isometric under an  automorphism of the Lie algebra of $E(1, 1)$.
			With respect to the Milnor Frame $\bv{1}, \bv{2}, \bv{3}$, it is readily established that any invertible linear operator that can be represented in matrix form relative to the chosen frame  as 
			$ F = 
			\begin{pmatrix} 
			f^{1}_{1} & f^{1}_{2} & f^{1}_{3}\\ 
			0 & \pm 1 & 0 \\ 
			f^{1}_{3} & f^{3}_{2} & f^{1}_{1} \\ 
			\end{pmatrix}$ 
			is an automorphism of the Lie algebra.
			Taking 
			$F =  
			\begin{pmatrix}
			0& 0  & \sqrt{3} \\ 
			0 & - 1 & 0 \\ 
			\sqrt{3} & 0 & 0 \\ 
			\end{pmatrix}$ 
			establishes the isometry between the two metrics in question.
			 Similarly, one is able to show that every left invariant metric on $E(1, 1)$ is equivalent to one of the form $\met{g} = A\meti{1} + B \meti{2} + \meti{3}$ with $A \ge 1$.
			\end{remark}
			
		\subsection{$\widetilde{\mathrm{SL}(2)}$}
			Let $\mathcal{H} = \widetilde{SL(2)}$, the universal cover of $SL\lp 2, \mathbb{R}\rp$, and let $\met{g}$ be a left invariant metric on $\mathcal{H}$ with Milnor frame
			$\bv{1}, \bv{2}, \bv{3}$.  
			The metric then takes the form $\met{g} = A \meti{1} + B \meti{2} + C \meti{3}$ and the Lie algebra structure is determined by the non-zero brackets 			
			\[
				\left[ \bv{2}, \bv{3}\right] =\bv{1} , \hspace{.5in} 
				\left[\bv{3}, \bv{1} \right] = \bv{2}, \hspace{.5in}
				\left[ \bv{1}, \bv{2}\right] = - \bv{3}.
			\]

			We will now show that $\widetilde{\mathrm{SL}(2)}$ does not support either a left invariant algebraic $\mathrm{XCF}$  soliton or a left invariant algebraic $\mathrm{RG}$-$\mathrm{2}$ soliton that gives rise to a self-similar solution of the corresponding flow.  
			The proof relies on a lemma appropriately adapted from \lemref{evlem} and on the possible signatures that the Ricci tensor of a left invariant metric on $\widetilde{\mathrm{SL}(2)}$ may have.
			In \cite{Milnor} (Corollary 4.7), Milnor establishes that the signature of the Ricci tensor of any left invariant metric on $\widetilde{\mathrm{SL}(2)}$ must be either $\lp +, -, -\rp$ or $\lp 0, 0, -\rp$.  
			
			\begin{lemma}
			\label{SL2lemma}
			Let $\met{g} =  A\meti{1} + B\meti{2} + C\meti{3}$ be a left invariant metric (expressed relative to a Milnor frame) on $\widetilde{\mathrm{SL}(2)}$.
		If $\widehat{\tensor{T}}\left[\met{g}\right]$ is diagonalized relative the Milnor frame, then $\met{g}$ is an algebraic $\tensor{T}$-soliton for the evolution equation 
		$\frac{\partial \met{g}}{\partial t} = \tensor{T}\left[\met{g}\right]$ if and only if $\widehat{T}^{1}_{1} = \widehat{T}^{2}_{2} = \widehat{T}^{3}_{3}$ and $\tensor{T}\left[\met{g}\right]$ is a scalar multiple of $\met{g}$.
			\end{lemma}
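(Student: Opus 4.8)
The plan is to apply \defref{algsol} together with \lemref{evlem}, exploiting the fact that on $\widetilde{\mathrm{SL}(2)}$ \emph{all three} Milnor brackets are non-zero, which over-determines the eigenvalue conditions and forces rigidity. First I would set $\tensor{D} = \widehat{\tensor{T}[\met{g}]} - \kappa\,\mathrm{Id}$ and observe that, since $\widehat{\tensor{T}[\met{g}]}$ is assumed diagonal relative to the Milnor frame, $\tensor{D}$ is itself diagonal with the Milnor frame as an ordered basis of eigenvectors, the eigenvalue attached to $\bv{i}$ being $d_{i} = \widehat{T}^{i}_{i} - \kappa$.

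By \lemref{evlem}, $\met{g}$ is an algebraic $\tensor{T}$-soliton (for some $\kappa$) precisely when each non-zero bracket $[\bv{i},\bv{j}]$ is an eigenvector of $\tensor{D}$ with eigenvalue $d_{i} + d_{j}$. Reading off the three non-zero brackets $[\bv{2},\bv{3}] = \bv{1}$, $[\bv{3},\bv{1}] = \bv{2}$, and $[\bv{1},\bv{2}] = -\bv{3}$ --- each of which lands in a single coordinate direction --- yields the three simultaneous conditions $d_{1} = d_{2} + d_{3}$, $d_{2} = d_{3} + d_{1}$, and $d_{3} = d_{1} + d_{2}$. Next I would solve this homogeneous linear system: summing the three equations gives $d_{1} + d_{2} + d_{3} = 2(d_{1} + d_{2} + d_{3})$, hence $d_{1} + d_{2} + d_{3} = 0$; substituting back into any one of them (e.g. $d_{1} = d_{2} + d_{3} = -d_{1}$) forces $d_{1} = 0$, and symmetrically $d_{2} = d_{3} = 0$.

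Consequently $\widehat{T}^{i}_{i} = \kappa$ for every $i$, so $\widehat{\tensor{T}[\met{g}]} = \kappa\,\mathrm{Id}$; invoking the defining relation $\met{g}(\widehat{\tensor{T}[\met{g}]}(\vf{X}),\vf{Y}) = \tensor{T}[\met{g}](\vf{X},\vf{Y})$ then gives $\tensor{T}[\met{g}] = \kappa\,\met{g}$, i.e. $\tensor{T}[\met{g}]$ is a scalar multiple of the metric, and in particular $\widehat{T}^{1}_{1} = \widehat{T}^{2}_{2} = \widehat{T}^{3}_{3}$. The converse is immediate: if $\widehat{T}^{1}_{1} = \widehat{T}^{2}_{2} = \widehat{T}^{3}_{3} = \kappa$, then $\tensor{D} = 0$, which is trivially a derivation of $\mathfrak{h}$, so $\met{g}$ is an algebraic $\tensor{T}$-soliton.

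There is no serious analytic obstacle here; the content is entirely linear-algebraic, and the argument is essentially forced. The point worth emphasizing --- and the reason this case differs sharply from the Heisenberg, $\mathrm{E}(2)$, and $\mathrm{E}(1,1)$ cases treated earlier, where one or two vanishing structure constants leave genuine freedom --- is that having all three structure constants non-zero produces three independent eigenvalue constraints rather than one or two, and this over-determined system admits only the trivial solution $d_{1} = d_{2} = d_{3} = 0$. The rigidity captured by this lemma is exactly what will later, once combined with Milnor's restriction that the Ricci signature on $\widetilde{\mathrm{SL}(2)}$ be $(+,-,-)$ or $(0,0,-)$, rule out non-trivial algebraic $\mathrm{XCF}$- and $\mathrm{RG}$-$2$-solitons on this group.
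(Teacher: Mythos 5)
Your proof is correct and follows essentially the same route as the paper: apply \lemref{evlem} to the diagonal operator $\tensor{D} = \widehat{\tensor{T}\left[\met{g}\right]} - \kappa\,\mathrm{Id}$, read off the three eigenvalue equations $d_{1} = d_{2} + d_{3}$, $d_{2} = d_{3} + d_{1}$, $d_{3} = d_{1} + d_{2}$ from the non-zero brackets, and conclude that the only solution is $d_{1} = d_{2} = d_{3} = 0$, i.e. $\widehat{\tensor{T}\left[\met{g}\right]} = \kappa\,\mathrm{Id}$. The only difference is that you spell out the linear algebra (summing the equations) and the trivial converse, which the paper leaves implicit.
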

			
			\begin{proof}
			Assume that $\widehat{\tensor{T}}\left[\met{g}\right]$ is diagonalized with respect to the selected frame.
			Since $\left[\bv{l}, \bv{m}\right] \ne 0$ for all pairs $(l, m)$ where $l$ and $m$ are distinct, then according to \lemref{evlem} the operator $\tensor{D}\left[\met{g}\right] = \widehat{\tensor{T}}\left[\met{g}\right] - \kappa \mathrm{Id} : \mathfrak{h} \to \mathfrak{h}$ is a derivation of the Lie algebra if and only if $\left[\bv{2}, \bv{3}\right] = \bv{1}$, $\left[\bv{3}, \bv{1}\right] = \bv{2}$, $\left[\bv{1}, \bv{2}\right] = - \bv{3}$ are eigenvectors for $\tensor{D}\left[\met{g}\right]$ with eigenvalues $d^{1}_{1} = d^{2}_{2} + d^{3}_{3}$, $d^{2}_{2} = d^{3}_{3} + d^{1}_{1}$ and $d^{3}_{3} = d^{1}_{1} + d^{2}_{2}$, respectively.
			The corresponding system of equations admits a solution if and only if $d^{1}_{1} = d^{2}_{2} = d^{3}_{3} = 0$, or equivalently, $\widehat{T}^{1}_{1} = \widehat{T}^{2}_{2} = \widehat{T}^{3}_{3} = \kappa$.
			\end{proof}	
			This allows us to establish the following.
			
			\begin{theorem}
			The Lie group $\mathcal{H} = \widetilde{SL(2)}$ does not support either a left invariant $\mathrm{XCF}$ algebraic soliton or a  left invariant $\mathrm{RG}$-$\mathrm{2}$ algebraic soliton that gives rise to a self-similar solution to the $\mathrm{RG}$-$\mathrm{2}$ flow.
			\end{theorem}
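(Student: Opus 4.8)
The plan is to reduce everything to \lemref{SL2lemma}, which tells us that on $\widetilde{\mathrm{SL}(2)}$ a left invariant metric $\met{g}$ can be an algebraic $\tensor{T}$-soliton only when $\widehat{\tensor{T}}\left[\met{g}\right]$ is a scalar multiple of the identity, i.e. when $\tensor{T}\left[\met{g}\right] = \kappa \met{g}$. The whole argument then plays this rigidity against Milnor's constraint (Corollary 4.7 of \cite{Milnor}) that the Ricci signature of any left invariant metric on $\widetilde{\mathrm{SL}(2)}$ is either $(+,-,-)$ or $(0,0,-)$. Throughout I would work in a Milnor frame, where by \eqref{operatorgeneric} we have $\ccfop\left[\met{g}\right] = \diag\left(K_{2}K_{3}, K_{3}K_{1}, K_{1}K_{2}\right)$, where by \eqref{rg2generic} the $\mathrm{RG}$-$\mathrm{2}$ operator is $\widehat{RG}^{i}_{i} = -\left(2(K_{m} + K_{n}) + \alpha\left((K_{m})^{2} + (K_{n})^{2}\right)\right)$, and where the Ricci operator is $\diag\left(K_{2} + K_{3}, K_{1} + K_{3}, K_{1} + K_{2}\right)$.

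For the $\mathrm{XCF}$ I would impose $K_{2}K_{3} = K_{3}K_{1} = K_{1}K_{2} = \kappa$. Taking pairwise differences gives $K_{3}(K_{2} - K_{1}) = K_{1}(K_{3} - K_{2}) = K_{2}(K_{3} - K_{1}) = 0$. If all three $K_{i}$ are nonzero these force $K_{1} = K_{2} = K_{3}$, so the Ricci operator is a nonzero multiple of the identity and hence definite, which is impossible by Milnor's signature. Next I would observe that a single vanishing curvature is incompatible with the difference relations, so at least two of the $K_{i}$ must vanish; but then, writing $K_{k}$ for the remaining curvature, the Ricci operator has eigenvalues $\left(K_{k}, K_{k}, 0\right)$, whose signature is never $(+,-,-)$ or $(0,0,-)$ (and the all-zero case would make $\met{g}$ flat, which $\widetilde{\mathrm{SL}(2)}$ does not admit). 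This exhausts the cases and rules out any algebraic $\mathrm{XCF}$ soliton, which is the first assertion.

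For the $\mathrm{RG}$-$\mathrm{2}$ flow I would first note that $\rg$ is natural but not homogeneous, so by \propref{solitonselfsimilar} only a \emph{steady} algebraic soliton can give rise to a self-similar solution; hence it suffices to rule out $\kappa = 0$, i.e. $\widehat{RG}^{i}_{i} = 0$ for all $i$, equivalently $\rg\left[\met{g}\right] = 0$. Rewriting this as $\ric\left[\met{g}\right] = -\frac{\alpha}{4}\tensor{Rm}^{2}\left[\met{g}\right]$ and using that $\tensor{Rm}^{2}$ is diagonal with nonnegative entries while $\alpha > 0$, the Ricci operator would have to be negative semidefinite, which immediately excludes the signature $(+,-,-)$. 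The remaining possibility is $(0,0,-)$: here two of $K_{2} + K_{3}$, $K_{1} + K_{3}$, $K_{1} + K_{2}$ vanish and one is negative, which after solving the two linear relations forces the off-direction curvature to be strictly positive (one finds $K_{i} = K_{j} = -K_{k}$ with $K_{k} > 0$). Substituting into the vanishing condition $\widehat{RG}^{i}_{i} = -\left(2(K_{m} + K_{n}) + \alpha\left((K_{m})^{2} + (K_{n})^{2}\right)\right) = 0$ along a zero-Ricci direction leaves $-\alpha\left((K_{m})^{2} + (K_{n})^{2}\right) = 0$, which is impossible since one of those curvatures is nonzero. This contradiction finishes the $\mathrm{RG}$-$\mathrm{2}$ case.

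The hard part will be this last degenerate branch: when the Ricci signature is $(0,0,-)$ the first-order (Ricci) part of $\rg$ vanishes along two directions, so ruling the metric out genuinely requires the second-order curvature term, and I expect the care lies in verifying that the sum of squares $(K_{m})^{2} + (K_{n})^{2}$ cannot also vanish there. The positivity of $\tensor{Rm}^{2}$ together with the explicit solution of the two Ricci relations is exactly what makes this step work, and it is the only place where the two summands of $\rg$ must be handled separately rather than through the uniform rigidity supplied by \lemref{SL2lemma}.
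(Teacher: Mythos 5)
Your proposal is correct and takes essentially the same route as the paper: both halves reduce to \lemref{SL2lemma} (which forces $\widehat{\tensor{T}}\left[\met{g}\right]$ to be a scalar multiple of the identity) and then rule out the resulting curvature configurations using Milnor's constraint that the Ricci signature on $\widetilde{\mathrm{SL}(2)}$ must be $(+,-,-)$ or $(0,0,-)$. Your $\mathrm{RG}$-$\mathrm{2}$ argument is in fact a more detailed write-up of the step the paper states tersely (that the system \eqref{rg2sl2} admits no solutions); the only caveat is that you invoke $\alpha>0$, whereas the paper's $\mathrm{SU}(2)$ classification implicitly entertains $\alpha<0$ as well --- but your semidefiniteness step survives verbatim with ``negative semidefinite'' replaced by ``positive semidefinite,'' and your $(0,0,-)$ analysis already works for any $\alpha\neq 0$.
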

			
			\begin{proof}
				We will first establish that $\widetilde{\mathrm{SL}(2)}$ does not support a left invariant algebraic $\mathrm{XCF}$ soliton.
				By \lemref{SL2lemma}, $\met{g}$ is an algebraic soliton for the $\mathrm{XCF}$ if and only if if $\kappa = \widehat{H}^{1}_{1} = \widehat{H}^{2}_{2} = \widehat{H}^{3}_{3}$.
				According to \eqref{xcfgeneric}, the cross curvature operator is  $ \ccfop = \diag \lp K_{2}K_{3}, K_{1}K_{3}, K_{1}K_{2}\rp$ and it follows that we must have $\kappa = 0$ (and the algebraic soliton is actually a fixed point) or the sectional curvature of the metric $\met{g}$ must be constant.
				 The restrictions on the possible signatures of the Ricci tensor (\cite{Milnor}, Corollary 4.7) show that $\widetilde{\mathrm{SL}(2)}$ does not support a metric of constant sectional curvature.
				 Furthermore, $\kappa = 0$ requires that at least two of three principal sectional curvatures must be zero.
				 From \eqref{ricciorthogonal}, we find that in this case the signature of the Ricci tensor would be $\lp +, +, 0\rp$, $\lp 0, -, -\rp$, or $\lp 0, 0, 0\rp$, none of which are possible for a left invariant metric on $\widetilde{\mathrm{SL}(2)}$.
				 We conclude  that $\widetilde{\mathrm{SL}(2)}$ does not support an algebraic $\mathrm{XCF}$-soliton.\\
				 
				 The proof that $\widetilde{\mathrm{SL}(2)}$ does not support an algebraic soliton that gives rise to self similar solution for the $\mathrm{RG}$-$\mathrm{2}$ flow follows similarly.
				According to \propref{solitonselfsimilar} and \propref{algimpliessol} we only need to concern ourselves with steady algebraic solitons  (i.e., $\kappa = 0$).
				From \lemref{SL2lemma}, we find that $\tensor{D}\left[\met{g}\right] = \rgop\left[\met{g}\right]$ is a derivation of the Lie algebra if $\widehat{RG}^{1}_{1} = \widehat{RG}^{2}_{2} = \widehat{RG}^{3}_{3} = 0$.
				From  \eqref{rg2generic}, it follows that for $\mathbf{D} = \rgop\left[\met{g}\right] $ to be a derivation of the Lie algebra the system of equations
					\begin{align}
					\label{rg2sl2}
					\alpha \left(\left(K_{2}\right)^2 + \left(K_{3}\right)^2\right) &= -2\left(K_{2} + K_{3}\right) = -2 Rc_{11} \nonumber \\
					\alpha \left(\left(K_{3}\right)^2 + \left(K_{1}\right)^2\right) &= -2\left(K_{3} + K_{1}\right) = -2 Rc_{22} \nonumber \\
					\alpha \left(\left(K_{1}\right)^2 + \left(K_{2}\right)^2\right) &= -2\left(K_{1} + K_{2}\right) = -2 Rc_{33}
					\end{align}
					must admit a solution.
					Again, relying on the permissible signatures of the Ricci tensor  of a left invariant Riemannian metric on $\widetilde{\mathrm{SL}(2)}$ ($(+, -, -)$ or $(0, 0, -)$), we conclude that \eqref{rg2sl2} does not admit any solutions.
					 Thus, $\widetilde{SL\left(2\right)}$ does not support an algebraic $\rg$-$2$ soliton structure that gives rise to a self-similar solution of the $\mathrm{RG}$-$\mathrm{2}$ flow.
			\end{proof}
					
		\subsection{$\mathrm{SU}(2)$}
		\label{su2}
			Let $\mathcal{H} = SU(2)$ and let $\met{g}$ be a left invariant metric on $\mathcal{H}$.
			Let $\bv{1}, \bv{2}, \bv{3}$ be a Milnor frame for the left invariant metric $\met{g}$, where
			the Lie algebra structure is then determined by the non-zero brackets 			
			\[
				\left[ \bv{2}, \bv{3}\right] =\bv{1} , \hspace{.5in} 
				\left[\bv{3}, \bv{1} \right] = \bv{2}, \hspace{.5in}
				\left[ \bv{1}, \bv{2}\right] =  \bv{3},
			\]
			and
			$\met{g}  = A \meti{1} + B \meti{2} + C \meti{3}$.
			
			Making the appropriate substitutions into \eqref{sectional} and \eqref{ricciorthogonal}, respectively, we find
			that the principal sectional curvatures of $\met{g}$ are 
			\begin{align}
			\label{sectionalsu2}
			K_{1} &= K\left(\bv{2} \wedge \bv{3}\right) = - \frac{3A^2 - B^2 + 2 B C - C^2 - 2 A B - 2 A C }{4 A  B C} \nonumber\\
			K_{2} & = K\left(\bv{1} \wedge \bv{3}\right) = \frac{ - 3B^2 - 2A C + A^2+ C^2 + 2 A B + 2 B C }{4 A  B C}\nonumber\\			
			K_{3} &= K\left(\bv{1} \wedge \bv{2}\right) = \frac{A^2 + B^2 - 3 C^2 - 2A B + 2B C + 2AC}{4 A B C}.
			\end{align}
			
		In finding the left invariant metrics on $\mathrm{SU}(2)$ that are algebraic solitons giving rise to self similar solutions of the $\mathrm{XCF}$ or  the $\mathrm{RG}$-$\mathrm{2}$ flow, we will again rely on the permissible signatures of the corresponding Ricci tensor, which are stated in \cite{Milnor}.
		Namely, in \cite{Milnor} (Corollary 4.5), Milnor establishes that the signature of the Ricci curvature tensor of a left invariant metric on $\mathrm{SU}(2)$ must be $(+, + , + )$, $(+, 0, 0)$, or $(+, - , -)$.

		\subsubsection{$\mathrm{XCF}$}
			The cross curvature tensor is $\ccf\left[\met{g}\right] = H_{11}\tp{1}{1} + H_{22}\tp{2}{2} + H_{33} \tp{3}{3}$, where
			\begin{equation*}
				H_{11} =  A{K_{2}K_{3}}, \hspace{.5in}
				H_{22} =  B{K_{1}K_{3}}, \hspace{.5in}
				H_{33} = C{K_{1}K_{2}},
			\end{equation*}
		with the corresponding cross curvature operator $\ccfop\left[\met{g}\right] = \left(\widehat{H}^{i}_{j}\right)$  is represented in the given basis by
			\begin{equation}
				\ccfop\left[\met{g}\right] = \diag\left(K_{2}K_{3} , K_{1}K_{3}, K_{1}K_{2}\right).
			\end{equation}
		
		\begin{proposition}
		\label{su2xcf}
		A left invariant metric $\met{g} = A \meti{1} + B\meti{2} + C\meti{3}$ is an algebraic soliton for the $\mathrm{XCF}$ on $\widetilde{SU\left(2\right)}$ if and only if $\met{g}$ has constant sectional curvature (i.e., $A = B = C$).
		\end{proposition}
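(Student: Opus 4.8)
The plan is to follow the same template used for $\widetilde{\mathrm{SL}(2)}$. Since the Lie algebra of $SU(2)$ also has all three brackets $[\bv{2},\bv{3}]$, $[\bv{3},\bv{1}]$, and $[\bv{1},\bv{2}]$ nonzero, the argument behind \lemref{SL2lemma} carries over verbatim: by \lemref{evlem} a diagonal operator $\mathbf{D} = \diag(d_1,d_2,d_3)$ is a derivation of $\mathfrak{h}$ if and only if $d_1 = d_2 + d_3$, $d_2 = d_3 + d_1$, and $d_3 = d_1 + d_2$, which together force $d_1 = d_2 = d_3 = 0$. Applying this to $\mathbf{D} = \ccfop[\met{g}] - \kappa\,\mathrm{Id}$ shows that $\met{g}$ is an algebraic $\mathrm{XCF}$-soliton precisely when $\ccfop[\met{g}]$ is a scalar multiple of the identity, i.e. $K_2K_3 = K_1K_3 = K_1K_2 = \kappa$.

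The reverse implication is then immediate: if $A = B = C$ the metric is the round metric on $SU(2) \cong \mathbf{S}^{3}$, so $K_1 = K_2 = K_3$ and $\ccfop[\met{g}] = \kappa\,\mathrm{Id}$ with $\mathbf{D} = 0$ a (trivial) derivation. For the forward implication I would first rule out $\kappa = 0$. In that case $K_2K_3 = K_1K_3 = K_1K_2 = 0$ forces at least two of the principal sectional curvatures to vanish, and feeding this into \eqref{ricciorthogonal} shows that the Ricci signature would be one of $(0,+,+)$, $(0,-,-)$, or $(0,0,0)$ up to reordering. None of these appears among the permissible signatures $(+,+,+)$, $(+,0,0)$, $(+,-,-)$ for a left invariant metric on $SU(2)$ recorded by Milnor (\cite{Milnor}, Corollary 4.5), so $\kappa \neq 0$.

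With $\kappa \neq 0$ no $K_i$ can vanish, and dividing the equalities $K_2K_3 = K_1K_3 = K_1K_2$ through by the relevant nonzero curvature gives $K_1 = K_2 = K_3$. The decisive step is then to deduce $A = B = C$ from this. Clearing the common positive denominator $4ABC$ in \eqref{sectionalsu2}, the equation $K_1 = K_2$ factors as $(B-A)(A+B-C) = 0$ and $K_2 = K_3$ factors as $(C-B)(B+C-A) = 0$. The main obstacle is the ensuing case analysis over the four sign combinations: using $A, B, C > 0$, the three mixed cases each force one of $A, B, C$ to vanish (for instance $B = A$ together with $B + C - A = 0$ gives $C = 0$), so only $B = A$ and $C = B$ survive, yielding $A = B = C$ and completing the equivalence.
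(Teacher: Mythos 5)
Your proof is correct and follows essentially the same route as the paper: the same reduction via \lemref{SL2lemma} to the condition $K_{2}K_{3} = K_{1}K_{3} = K_{1}K_{2} = \kappa$, and the same Ricci-signature argument (using \cite{Milnor}, Corollary 4.5) to exclude the steady case $\kappa = 0$. The only difference is that you explicitly verify the final step $K_{1} = K_{2} = K_{3} \Rightarrow A = B = C$ via the factorizations $(B-A)(A+B-C) = 0$ and $(C-B)(B+C-A) = 0$ together with positivity of $A, B, C$, a point the paper asserts only parenthetically.
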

		Note that \lemref{SL2lemma}  applies equally well to $\mathrm{SU}(2)$ and that the proof carries through without any modifications.
			\begin{proof}
			From \lemref{SL2lemma}, it follows that for  $\mathbf{D} = \ccfop\left[\met{g}\right] - \kappa \mathrm{Id}$ to be a derivation of the Lie algebra we must have $\kappa = \widehat{H}^{1}_{1} = \widehat{H}^{2}_{2} = \widehat{H}^{3}_{3}  $.		
			Since $\widehat{H}^{1}_{1} = K_{2}K_{3}$, $\widehat{H}^{2}_{2} = K_{1}K_{3}$ and $\widehat{H}^{3}_{3} = K_{1}K_{2}$,  $\met{g}$ being an algebraic soliton requires that $\kappa = 0$ or that $\met{g}$ have constant sectional curvature (which only occurs when $A = B = C$).
			That $\met{g}$ is an algebraic soliton when $A = B = C$ is clear.
			To verify that there are no steady algebraic solitons for the $\mathrm{XCF}$, observe that $\kappa = 0$ forces at least two of the three principal sectional curvatures to be equal to zero.
			Since $\ric\left[\met{g}\right] =  A\lp K_{2} + K_{3}\rp \meti{1} B\lp K_{1} + K_{3}\rp \meti{2} +  C\lp K_{1} + K_{2}\rp \meti{3}$,  then having two of the three principal sectional curvatures being zero would force the signature of the Ricci tensor to be $(+, +, 0)$, $(-, -, 0)$, or $(0, 0, 0)$, none of which are possible.			
			Thus, we find that there are no left invariant metrics on $\mathrm{SU}(2)$ where at least two of the three principal sectional curvatures are zero and we conclude that the only algebraic solitons are the metrics of constant sectional curvature.
			\end{proof}
		
			\subsubsection{Algebraic $\rg$-$2$ Solitons}
			
					\begin{proposition}
					A left invariant metric $\met{g} = A \meti{1} + B\meti{2} + C\meti{3}$ is an algebraic soliton that gives rise to a self-similar solution for the $\mathrm{RG}$-$\mathrm{2}$ flow on $SU\left(2\right)$ if and only if
						\begin{enumerate}
							\item $\met{g}$ has constant sectional curvature ($A = B = C$) and $\alpha = -8A$, 
							
							\item $ A = \frac{4}{3}B = \frac{4}{3}C$ and $\alpha = -\frac{9}{2}A$, 
							
							\item $A = B = \frac{3}{4}C$ and $\alpha = -6A$, or
							
							\item $A = C = \frac{3}{4}B$ and $\alpha = -6A$.
						\end{enumerate} 					
					\end{proposition}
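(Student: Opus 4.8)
The plan is to reduce the soliton condition to a small polynomial system in the principal sectional curvatures $K_{1}, K_{2}, K_{3}$ and then solve it by difference-elimination, using Milnor's permissible Ricci signatures on $\mathrm{SU}(2)$ to discard the degenerate branches. Since \lemref{SL2lemma} applies verbatim to $\mathrm{SU}(2)$, a left invariant $\met{g}$ is an algebraic $\rg$-soliton exactly when the three diagonal entries of $\rgop\left[\met{g}\right]$ coincide, their common value being the soliton constant $\kappa$. Because $\rg$ is natural but \emph{not} homogeneous, \propref{solitonselfsimilar} and \propref{algimpliessol} show that only a \emph{steady} algebraic soliton yields a self-similar solution, so I would impose $\kappa = 0$, i.e. $\widehat{RG}^{1}_{1} = \widehat{RG}^{2}_{2} = \widehat{RG}^{3}_{3} = 0$. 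Feeding \eqref{rg2generic} into \eqref{operatorgeneric}, this becomes the system $2\lp K_{m} + K_{n}\rp + \alpha\lp K_{m}^{2} + K_{n}^{2}\rp = 0$ as $(l,m,n)$ runs over the cyclic permutations of $(1,2,3)$.

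Next I would eliminate by pairwise differences. The difference of any two of these equations factors as $\lp K_{i} - K_{j}\rp\lp 2 + \alpha(K_{i} + K_{j})\rp = 0$, so for every pair either $K_{i} = K_{j}$ or $\alpha(K_{i} + K_{j}) = -2$. This at once rules out the all-distinct case: if $K_{1}, K_{2}, K_{3}$ were pairwise distinct then all three sums would equal $-2/\alpha$, forcing $K_{1} = K_{3}$, a contradiction. There remain the all-equal and the exactly-two-equal cases.

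In the all-equal case $K_{1} = K_{2} = K_{3} = K$, testing $K_{1} = K_{2}$ and $K_{2} = K_{3}$ against \eqref{sectionalsu2} forces $A = B = C$ (each alternative branch drives one of $A,B,C$ to $0$), where $K = \tfrac{1}{4A} \ne 0$; the surviving relation $2K(2 + \alpha K) = 0$ then gives $\alpha = -8A$, which is \emph{1}. For the two-equal case I would invoke the cyclic symmetry of the $\mathrm{SU}(2)$ bracket relations and treat $K_{2} = K_{3}$ as representative. The difference relations force $\alpha(K_{1} + K_{2}) = -2$, while the surviving equation $2K_{2}(2 + \alpha K_{2}) = 0$ splits into $K_{2} = 0$ or $\alpha K_{2} = -2$. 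The branch $K_{2} = K_{3} = 0$ is excluded because it gives $\ric\left[\met{g}\right]$ the signature $\lp 0, \pm, \pm\rp$, which Milnor's classification of Ricci signatures on $\mathrm{SU}(2)$ forbids; the branch $\alpha K_{2} = -2$ combines with $\alpha(K_{1} + K_{2}) = -2$ to yield $K_{1} = 0$. The crucial computational point is that $K_{2} = K_{3}$ is equivalent to $B = C$ or $A = B + C$, and the spurious factor $A = B + C$ gives $K_{1} = -1/A \ne 0$, contradicting $K_{1} = 0$; hence $B = C$, and then $K_{1} = 0$ forces $A = \tfrac{4}{3}B$ and $\alpha = -2/K_{2} = -\tfrac{9}{2}A$, which is \emph{2}. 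The two cyclic images of this solution produce \emph{3} and \emph{4}.

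The main obstacle is precisely this two-equal case, namely recognizing and discarding its two spurious branches: the ``repeated curvature vanishes'' branch (removed by the signature constraint) and the secondary algebraic factor $A = B + C$ of the equation $K_{2} = K_{3}$ (removed because it contradicts $K_{1} = 0$). Once these are cleared, the converse is routine: substituting each of the four $(A,B,C,\alpha)$ relations back into the $\widehat{RG}^{i}_{i}$ and checking that all three vanish confirms each is a steady algebraic $\rg$-soliton, whence \propref{solitonselfsimilar} furnishes the self-similar solution.
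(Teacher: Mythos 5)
Your proposal is correct, and it follows the paper's overall skeleton: \lemref{SL2lemma} together with the steadiness requirement (via \propref{solitonselfsimilar} and \propref{algimpliessol}) reduces the problem to the vanishing of all three diagonal entries of $\rgop\left[\met{g}\right]$, i.e.\ the system \eqref{rg2su2}, and Milnor's restrictions on permissible Ricci signatures kill the degenerate branches. Where you genuinely diverge is the elimination step. The paper observes that \eqref{rg2su2} is \emph{equivalent} to the decoupled system \eqref{equivsys}: writing $f(x) = x\left(\alpha x + 2\right)$, the three equations read $f(K_{m}) + f(K_{n}) = 0$ over the cyclic pairs, and subtracting them pairwise gives $f(K_{1}) = f(K_{2}) = f(K_{3})$, whence each $f(K_{i}) = 0$; so each $K_{i} \in \left\{0, -2/\alpha\right\}$ at once, and the only case analysis needed is ``all three equal $-2/\alpha$'' versus ``exactly one equals zero.'' Your pairwise-difference factorization $\left(K_{i} - K_{j}\right)\left(2 + \alpha\left(K_{i} + K_{j}\right)\right) = 0$ reaches the same enumeration, but at the cost of a longer case analysis (all-distinct, all-equal, exactly-two-equal) with additional spurious branches to discard. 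In compensation, your argument is more complete precisely where the paper is terse: the paper ends with ``the exact solutions can then be obtained by using \eqref{sectionalsu2},'' whereas you actually carry out that back-substitution, and in particular you isolate the one genuinely non-obvious point in it --- that $K_{2} = K_{3}$ factors as $B = C$ or $A = B + C$, and the branch $A = B + C$ forces $K_{1} = -1/A \ne 0$, contradicting $K_{1} = 0$ --- which is exactly the detail a reader must supply to finish the paper's proof. Both routes are sound; the paper's decoupling is slicker, yours is more self-contained.
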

					\begin{remark}
					As previously noted, the $\rg$-$2$ flow is not of physical interest when $\alpha $ is negative and all of the algebraic solitons occur when $\alpha $ is negative.
					
					\end{remark}
					
					\begin{proof}
					Again, applying \lemref{SL2lemma}, we find that for a given left-invariant metric $\met{g} = A \meti{1} + B\meti{2} + C \meti{3}$ on $\mathrm{SU}(2)$, $\mathbf{D} = \rgop\left[\met{g}\right]$ is a derivation of the Lie algebra $\mathfrak{h}$ if and only if $\mathbf{D} = 0$.
					Combined with \eqref{rg2generic}, it follows that $\mathbf{D} = \rgop\left[\met{g}\right]$ is a derivation if and only if the system of equations
					\begin{equation}
					\label{rg2su2}
					\begin{cases}
					\medskip
					\alpha \left(\left(K_{2}\right)^2 + \left(K_{3}\right)^2\right) + 2\left(K_{2} + K_{3}\right) = 0\\
					\medskip
					\alpha \left(\left(K_{3}\right)^2 + \left(K_{1}\right)^2\right) + 2\left(K_{3} + K_{1}\right) = 0\\
					\alpha \left(\left(K_{1}\right)^2 + \left(K_{2}\right)^2\right) + 2\left(K_{1} + K_{2}\right) = 0
					\end{cases}
					\end{equation}
					admits a solution.
					The system of equations above is equivalent to 
					\begin{equation}
					\label{equivsys}
					\begin{cases}
					K_{1}\left(\alpha K_{1} + 2\right) &= 0 \\
					K_{2}\left(\alpha K_{2} + 2\right) &= 0 \\
					K_{3}\left(\alpha K_{3} + 2\right) &= 0.
					\end{cases}
					\end{equation}
					As noted in the proof of \propref{su2xcf}, $\mathrm{SU}(2)$ does not support a left invariant metric where two or more of the principal sectional curvatures are equal to zero.
					Thus, the system of equations \eqref{equivsys} is satisfied when the principal sectional curvatures are all equal to $-\frac{2}{\alpha}$, or when one of the principal sectional curvatures $K_{l} = 0$ and the remaining principal sectional curvatures $K_{m}$ and  $K_{n}$ are equal to  $-\frac{2}{\alpha}$.
					In this case, the conditions on the permissible Ricci tensors show that all algebraic solitons will have a Ricci tensor with signature $(+, +, +)$ and $\alpha$ will necessarily be negative.
					The exact solutions can then be obtained by using \eqref{sectionalsu2} to find the appropriate values of $A, B$ and $C$.
					\end{proof}
					
					\begin{remark}
					Note that steady algebraic solitons are actually fixed points for the $\mathrm{RG}$-$\mathrm{2}$ flow on $\mathrm{SU}(2)$.
					\end{remark}

\bibliographystyle{plain}
\bibliography{wears}

\end{document}